\newcommand{\1}{\mathfrak 1}
\newcommand{\R}{\mathbb R}
\newcommand{\N}{\mathbb N}
\newcommand{\Z}{\mathbb Z}
\newcommand{\C}{\mathbb C}
\newcommand{\E}{\mathbb E}
\newcommand{\X}{\mathcal X} 
\newcommand{\Y}{\mathcal Y} 
\newcommand{\B}{\mathcal B} 
\newcommand{\cC}{\mathcal C} 
\newcommand{\D}{\mathcal D} 
\newcommand{\F}{\mathcal F}
\newcommand{\K}{\mathfrak K} 
\newcommand{\bX}{\bar{X}}
\newcommand{\bx}{\bar{x}}
\newcommand{\by}{\bar{y}}
\newcommand{\bz}{\bar{z}}
\newcommand{\bO}{\bar{\Omega}}
\newcommand{\brho}{\bar{\rho}}
\newcommand{\bbeta}{\bar{\beta}}
\newcommand{\bu}{\bar{u}}
\def\txti{{\textnormal{i}}}
\DeclareMathOperator{\conj}{conj}
\theoremstyle{plain}
\newtheorem{theorem}{Theorem}[section]
\newtheorem{proposition}[theorem]{Proposition}
\newtheorem{lemma}[theorem]{Lemma}
\newtheorem{remark}[theorem]{Remark}
\newtheorem{definition}[theorem]{Definition}
\newtheorem{example}[theorem]{Example}
\title{A posteriori validation of generalized polynomial chaos expansions}
\author{Maxime Breden \thanks{CMAP, \'Ecole Polytechnique, route de Saclay, 91120 Palaiseau, France. \texttt{maxime.breden@polytechnique.edu}}}
\begin{document}

\maketitle

\begin{abstract}
Generalized polynomial chaos expansions are a powerful tool to study differential equations with random coefficients, allowing in particular to efficiently approximate random invariant sets associated to such equations. In this work, we use ideas from validated numerics in order to obtain rigorous a posteriori error estimates together with existence results about gPC expansions of random invariant sets. This approach also provides a new framework for conducting validated continuation, i.e. for rigorously computing isolated branches of solutions in parameter-dependent systems, which generalizes in a straightforward way to multi-parameter continuation. We illustrate the proposed methodology by rigorously computing random invariant periodic orbits in the Lorenz system, as well as branches and 2-dimensional manifolds of steady states of the Swift-Hohenberg equation.
\end{abstract}

\textbf{Keywords:} generalized polynomial chaos; validated numerics; validated continuation; uncertainty quantification

\section{Introduction}
\label{sec:intro}

Most of the mathematical models that are used nowadays to try and describe the world we live in, or at least some very specific region or aspect of it, include some stochastic component. This randomness can have various sources: sometimes we do not fully know or understand the mechanisms underlying the phenomenon we are trying to describe, sometimes we need to account for the influence of events occurring at much smaller scales than that of the full system, for which we cannot afford to solve too accurately, and sometimes our model contains crucial parameters whose value can only be known up to some uncertainty level.

A common mathematical framework to study this last situation is the one of random differential equations, say a random ODE described by a nonlinear vector field $f$
\begin{align}
\label{eq:RDE_intro}
X' = f(X,p),
\end{align}
or more generally a random PDE, where $p$ denotes a parameter whose value is not known precisely, and is therefore represented by a random variable. In this work we assume that the probability distribution of $p$ is known, for instance through some preliminary statistical inference. In this situation, one would like to understand and quantify as precisely as possible how the uncertainty in $p$ affects the output of the system~\cite{Sul15}.

If we want to study the global behavior of~\eqref{eq:RDE_intro}, one option is to use a Monte-Carlo type approach: sample $p$ according to its known distribution, and study for each sampled value $p_i$ the deterministic system $X' = f(X,p_i)$. Of course, even in the deterministic case, understanding the global dynamics of a system of nonlinear ODEs can already be a daunting task. Numerical simulations can then be of great help to get some insights, in particular in order to study invariant sets (equilibria, periodic orbits, invariant manifolds, connecting orbits, etc), which typically act as building blocks of the global dynamics.

Another option to study problems with random parameters like~\eqref{eq:RDE_intro}, which has risen in popularity in the last decades, is the usage of generalized polynomial chaos (gPC) expansions~\cite{GhaSpa91,Wie38}. The main idea is to expand the random quantity of interest as a series with a well chosen basis, namely polynomials in $p$ which are orthogonal with respect to probability distribution of $p$. One is then left with computing the (deterministic!) coefficients of this series expansion, and as in the Monte-Carlo approach we recover a deterministic problem, and the ability to use existing algorithms for it. This strategy has proven very effective in various contexts~\cite{LeMKni10,Xiu09}, and in particular the recent work~\cite{BreKue20} showcases that gPC can be used to approximate some random invariant sets generated by random ODEs of the form~\eqref{eq:RDE_intro}.

Once the gPC expansion has been computed, it readily provides quantitative information about the way the randomness in $p$ influences the solutions of the system. In order to be more concrete, let us focus for instance on periodic orbits. With a gPC representation, we directly have access to the mean and the variance of the period (which typically depends in a non-explicit, nonlinear way on $p$), and we can also do cheaper Monte-Carlo simulations to estimate the full probability distribution of the period, or to quantify the shape of the orbit in phase space, etc.

\medskip

The above discussion exemplifies why gPC is a very powerful tool to quantify uncertainties, at least if we had access to the \emph{exact} gPC representation of the object of interest. However, in practice, the fact that we heavily rely on numerical computations introduces an extra level of uncertainty. The two main sources of approximations in the above procedure are: the fact that the (theoretically infinite) gPC series expansion is truncated, because we can only compute finitely many coefficients, and the fact that the deterministic algorithms used to compute these gPC coefficients also contain truncation errors (indeed even for a fully deterministic nonlinear ODE, one cannot hope to compute exactly periodic orbits, or more complicated invariant sets).

Regarding the truncation of the gPC expansion, it is known a priori that the truncation error decays quickly (spectral convergence) when $X$ depends smoothly on $p$~\cite{CanHusQuaTho12,Fun08}, and some tight convergence results were even obtained recently in a non-smooth case~\cite{BouGobRey21}. However, when $X$ is not known a priori, these estimates cannot give any quantitative information about the truncation error. A posteriori error estimators for gPC expansions have also been developed, especially in the context of random linear elliptic PDEs~\cite{DebBabOde01,EigGitSchZan14,BesPowSil14}, but also for more general random PDEs~\cite{ButDawWid11,MatLeM07,MeyRohGie20}. Yet again, for nonlinear problems these estimators typically still contain some approximations and cannot provide fully rigorous error bounds between the approximate solution and the exact one, if only because the existence of an exact solution is not always readily available.

\medskip

The purpose of this work is to quantify in a very explicit way all the errors involved in the computation of some random invariant sets using gPC. For instance, if $\bX$ is a gPC representation of an approximate random periodic solution that we obtained numerically, we are going to provide guaranteed a posteriori estimates stating that there exists an exact random periodic solution $X^*$, with $\Vert \bX - X^*\Vert\leq r$ in some well chosen norm, where the error bound $r$ will be explicit. In the context of deterministic dynamical systems, such guaranteed a posteriori error estimates which also provide existence results go back at least to the proof of the Feigenbaum conjecture~\cite{EckWit87,Lan82} (see also~\cite{Ura65} for an even earlier work) and have become more and more popular since then, mostly under the name of \emph{validated/rigorous numerics} or \emph{computer-assisted proofs}. We will recall some of the main ideas behind these techniques in this work, and refer to  the survey papers~\cite{Gom19,KapMroWilZgl21,KocSchWit96,Rum10,BerLes15} and books~\cite{NakPluWat19,Tuc11} for a more in-depth overview of the field. The main contribution of this work is to show that these ideas can be extended, in a computationally efficient way, to dynamical systems with random coefficients.

\medskip

Before proceeding further, let us present an alternate viewpoint for the techniques we develop in this paper. The important starting observation is that, in any kind of gPC expansion, the probability distribution of $p$ only influences the choice of the expansion basis. Once a basis has been selected, one can forget the random character of $p$, and simply view~\eqref{eq:RDE_intro} as a deterministic parameter-dependent problem. In that context, numerical continuation methods can be used, for instance to approximate a curve of periodic orbits. This is exactly what we do with a gPC expansion, the only difference being that traditional continuation methods would typically proceed by computing points close to one another along the curve, and glue them together in a low-order (say piece-wise linear) fashion, whereas here we directly compute a larger chunk of curve at once, by looking for a higher order parameterization. 

Numerical continuation can be used together with validated numerics to prove the existence of curves of solutions and to get tight and explicit error bounds (see e.g. \cite{AriKoc10,BreLesVan13,BerLesMis10,BerQue21,Wan18}), but up to now this has mostly been done with the piece-wise linear approximations provided by usual predictor-corrector techniques. The only exceptions seems to be the recent works~\cite{Ari22,AriGazKoc21}, where Taylor expansions in the parameter are used to compute and validate larger pieces of curve at once. The approach proposed in this paper is very similar, but we generalize it to other kind of expansions bases, which proves to be sometimes more efficient than using Taylor expansions. This framework also generalizes in a completely straightforward way to rigorous mutli-parameter continuation, which again provides a higher-order and more global alternative to the existing techniques~\cite{GamLesPug16}, which also rely on local piece-wise linear approximations.

\medskip

The remainder of the paper is organized as follows. In Section~\ref{sec:background}, we introduce some of the tools that will be required in this work, in particular well chosen sequence spaces provided with a discrete convolution and a type of Newton-Kantorovich Theorem, and start with a basic example (Section~\ref{sec:BasicExample}) describing how these tools can be combined to rigorously validate gPC expansions. We then explain in Section~\ref{sec:Lorenz} how this framework can be applied to random invariant sets, via the example of random periodic orbits in the Lorenz system. This section ends with some comparisons regarding the performance of several choices of polynomial bases. We continue with a different example in Section~\ref{sec:SH}, namely the Swift-Hohenberg equation, for which we rigorously compute parameter-dependent families of steady states. With this example we focus more on the validation continuation viewpoint, and on how the proposed technique interacts with bifurcations, and also discuss how to handle multiple parameters at once. We wrap up in Section~\ref{sec:conclusion}, where we summarize our work, and discuss the current limitations and possible extensions of the proposed approach. All the codes associated with this work are available at~\cite{BreGit}.

\section{Background material, notations and a basic example}
\label{sec:background}

In this section, we introduce some of the objects and tools that we make use of in this work. Most of the material presented here is not original, and mainly included for the convenience of the reader, and for the sake of fixing some notations. We discuss weighted $\ell^1$ spaces of Fourier coefficients in Section~\ref{sec:Fourier}, and an extension where each Fourier coefficient is itself written as a gPC expansions together with associated generalized convolutions structures in Section~\ref{sec:gene_convo_prod}. We introduce notations for finite dimensional projections in Section~\ref{sec:projection}, and state a useful lemma for studying the norm of linear operators on Schauder spaces in Section~\ref{sec:operator_norm}. We then recall a specific variation of the Newton-Kantorovich theorem, which is a cornerstone of many computer-assisted technique, in Section~\ref{sec:NK}, and then present a very easy example where we use this theorem to validate a gPC expansion in Section~\ref{sec:BasicExample}.

\subsection{Fourier coefficients and $\ell^1$ spaces}
\label{sec:Fourier}

It will be convenient to represent several of the solutions we look for in this work as Fourier series, such as
\begin{align*}
u(t) = \sum_{k\in\Z} u_k e^{ikt}.
\end{align*}
A natural function space to work with is then to consider the set of Fourier coefficients having some prescribed decay rate.
\begin{definition}
\label{def:ell_1_Z}
Let $\left(X,\Vert \cdot\Vert\right)$ be a normed vector space and $\nu\geq 1$. We define
\begin{align*}
\ell^1_\nu(\Z,X) = \left\{ u=(u_n)_{n\in\Z} \in X^\Z,\ \left\Vert u\right\Vert_{\ell^1_\nu(\Z,X)} := \sum_{n\in\Z} \Vert u_n\Vert \nu^{\vert n \vert} < \infty \right\}.
\end{align*}
In the sequel, we sometimes shorten $\ell^1_\nu(\Z,X)$ into $\ell^1_\nu$ when knowledge about the set of indices and $X$ is not relevant or clear from context.
%We will also consider the space $\ell^1_\nu(\N,X)$, defined analogously.
\end{definition}
\begin{remark}
As soon as $\nu>1$, the coefficients of an element of $\ell^1_\nu$ decay at least geometrically, which means the associated function has analytic regularity. This might be seen as a strong requirement, but it should rather be thought of as a precise information: if it happens that the solutions we are dealing with  have analytic regularity, by choosing such weighted spaces for the a posteriori analysis we will be able to \emph{prove} that they do have said regularity. If we had to deal with less smooth solutions, we could use different spaces~\cite{LesMir17}.
\end{remark}
We recall that the discrete convolution makes weighted $\ell^1$ spaces into Banach algebras, as soon as the weights are submultiplicative. This Banach algebra property is going to be very useful for obtaining the validation estimates.
\begin{lemma}
\label{lem:BanachBanach}
Let $\left(X,\Vert \cdot\Vert\right)$ be a Banach algebra, with multiplication denoted by $\ast$, and $\nu\geq 1$. Given $u$ and $v$ in $\ell^1_\nu(\Z,X)$, we can define their convolution product $u \circledast v$ by
\begin{align*}
\left(u\circledast v\right)_k = \sum_{l\in\Z} u_l \ast v_{k-l} \qquad \forall~k\in\Z,
\end{align*}
and we have
\begin{align*}
\left\Vert u\circledast v\right\Vert_{\ell^1_\nu} \leq \left\Vert u\right\Vert_{\ell^1_\nu} \left\Vert v \right\Vert_{\ell^1_\nu},
\end{align*}
i.e., $\ell^1_\nu(\Z,X)$ is a Banach algebra for the multiplication $\circledast$.
\end{lemma}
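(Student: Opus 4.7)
The proof is essentially a standard computation exploiting the submultiplicativity of the weight $\nu^{|n|}$ together with the Banach algebra structure of $X$. My plan is first to establish the pointwise bound on each coefficient $(u \circledast v)_k$, then to sum it up using Fubini/Tonelli, and finally to use completeness of $\ell^1_\nu(\Z, X)$ (which follows from completeness of $X$ by standard arguments) to conclude we indeed have a Banach algebra.

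The central inequality is the submultiplicativity $\nu^{|k|} \leq \nu^{|l|} \nu^{|k-l|}$ for all $k,l \in \Z$, which is immediate from the triangle inequality $|k| \leq |l| + |k-l|$ combined with $\nu \geq 1$. Using this together with the Banach algebra property $\|u_l \ast v_{k-l}\| \leq \|u_l\| \|v_{k-l}\|$, I would estimate
\begin{align*}
\left\Vert u \circledast v \right\Vert_{\ell^1_\nu}
&\leq \sum_{k \in \Z} \sum_{l \in \Z} \|u_l\| \|v_{k-l}\| \nu^{|k|} \\
&\leq \sum_{k \in \Z} \sum_{l \in \Z} \|u_l\| \nu^{|l|} \, \|v_{k-l}\| \nu^{|k-l|}.
\end{align*}
After swapping the order of summation (justified by Tonelli since all terms are nonnegative) and performing the change of variables $m = k-l$ in the inner sum, the right-hand side factorizes as $\|u\|_{\ell^1_\nu} \|v\|_{\ell^1_\nu}$.

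A small technical point that needs to be addressed before the above chain of inequalities is rigorous: one must justify that for each fixed $k$, the series $\sum_l u_l \ast v_{k-l}$ actually converges in $X$, so that $(u \circledast v)_k$ is well-defined. This follows from the same bound: the series converges absolutely because $\sum_l \|u_l\| \|v_{k-l}\| \leq \nu^{-|k|}\|u\|_{\ell^1_\nu}\|v\|_{\ell^1_\nu} < \infty$, and $X$ is complete. The finiteness of the full double sum computed above then also shows $u \circledast v \in \ell^1_\nu(\Z, X)$, giving simultaneously the product's membership in the space and the desired norm bound.

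I do not anticipate any genuine obstacle: the proof is essentially book-keeping, and the ``trick'' (submultiplicativity of the weight) is already flagged in the statement of the lemma. The only place to be a little careful is to make sure the Banach algebra property of $X$ is used for each pointwise product before taking norms, and that Tonelli's theorem is invoked on nonnegative terms rather than attempting to swap sums after cancellations. Completeness of $\ell^1_\nu(\Z, X)$ itself — needed to call it a Banach algebra and not merely a normed algebra — is a standard exercise (Cauchy sequences are coordinate-wise Cauchy in $X$, hence converge, and a truncation argument upgrades this to $\ell^1_\nu$ convergence), and I would either sketch it in one line or cite it as folklore.
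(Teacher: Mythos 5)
Your proof is correct and is exactly the standard argument the paper relies on: the paper states Lemma~\ref{lem:BanachBanach} without proof, recalling it as a known fact about submultiplicative weights, and the same computation (triangle inequality, Banach algebra property of $X$, exchange of sums via Tonelli) appears verbatim in the paper's proof of the analogous Banach algebra lemma for the generalized convolution on $\ell^1_\eta(\N,\C)$. Your additional care about well-definedness of each coefficient $(u\circledast v)_k$ and completeness of $\ell^1_\nu(\Z,X)$ is sound and goes slightly beyond what the paper spells out, but does not constitute a different approach.
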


For a deterministic periodic solution, each coefficient $u_k$ is simply a complex number and we will therefore use the above definition with $X=\C$. However, in the presence of a random parameter $p$ in the system, each $u_k$ will also be random, and therefore expressed using a gPC expansion. In that case, $u_k$ will itself be a sequence of coefficients and $X$ an associated sequence space.

In the next subsection, we recall the necessary ingredients for equipping spaces of gPC coefficients with a Banach algebra structure, so as to be able to use the above Lemma.

\subsection{Linearization formulas and generalized convolution products}
\label{sec:gene_convo_prod}

Most of the material presented in this subsection about the relationships between orthogonal polynomials and Banach algebras can be found (in a different context) in the lecture notes~\cite{Szw05}. We also refer to the appendix of~\cite{BreKue20} for discussions related to implementation issues.

In this work, any quantity $x$ (a Fourier coefficient, the period of a periodic orbit, etc) which depends on a parameter $p$ will be written using gPC expansions, i.e.
\begin{align*}
x(p) = \sum_{n\in\N} x_n \phi_n(p),
\end{align*}
where $\left(\phi_n\right)_{n\in\N}$ is basis of polynomials. 

The basis of gPC is that, when $p$ is a random variable having a density function $\varrho_p$ with finite moments, one should use for the basis $\left(\phi_n\right)_{n\in\N}$ orthogonal polynomials with respect to $\varrho_p$, i.e. such that $\int \phi_m \phi_n \varrho_p = 0$ as soon as $m\neq n$.

\begin{example}
Here are a couple of examples, which are particular cases of Jacobi polynomials, that which we make use of in this work
\begin{itemize}
\item The Legendre polynomials $P_n$, which correspond to $\varrho_p(t)=\frac{1}{2}\1_{(-1,1)}(t)$;
\item The Chebyshev polynomials of the first kind $T_n$, which correspond to $\varrho_p(t)=\frac{1}{\pi\sqrt{1-t^2}}\1_{(-1,1)}(t)$;
\item The Chebyshev polynomials of the second kind $U_n$, which correspond to $\varrho_p(t)=\frac{2}{\pi}\sqrt{1-t^2}\1_{(-1,1)}(t)$;
\item The Gegenbauer or ultraspherical polynomials $C_n^{\mu}$, $\mu>-\frac{1}{2}$, $\mu\neq 0$, which correspond to $\varrho_p(t)=\frac{2^{2\mu-1}\mu B(\mu,\mu)}{\pi}(1-t^2)^{\mu-\frac{1}{2}}\1_{(-1,1)}(t)$.
\end{itemize}
We point out that, for a given $\varrho_p$, each orthogonal polynomial $\phi_n$ is only defined up to a multiplicative  constant, and a normalization condition is required in order to uniquely characterize them. In this work, we choose the condition $\phi_n(1) = 1$ for all $n$. With this normalization, we recover the \emph{traditional} definition of the Legendre and Chebyshev polynomials of the first kind, but the \emph{usual} Chebyshev polynomials of the second kind and Gegenbauer polynomials have to be renormalized. The reason behind this normalization choice is explained in Lemma~\ref{lem:gPC_convo}.

Finally, we will also use the monomial basis $\phi_n(p) = p^n$, in order to compare the performances of gPC expansions with the one of Taylor expansions.

For a more complete description of gPC choices and their relations to the Askey scheme, see~\cite{XiuKar02}.
\end{example}

\begin{definition}
\label{def:ell_1_N}
Let $\eta\geq 1$. We define
\begin{align*}
\ell^1_\eta(\N,\C) = \left\{ x=(x_n)_{n\in\N} \in \C^\N,\ \left\Vert u\right\Vert_{\ell^1_\eta(\N,\C)} := \sum_{n\in\N} \vert x_n\vert \eta^{n} < \infty \right\}.
\end{align*}
In the sequel, we always use $\eta$ as a weight when we consider a space of gPC coefficients, and $\nu$ for the Fourier coefficients, in order to better know at a glance which type of object we are currently dealing with.
\end{definition}
\begin{remark}
As in the previous subsection, these spaces encode regularity properties. Indeed, having $\eta\geq 1$ will be sufficient to ensure that, for any $x$ in $\ell^1_\eta$, the corresponding function
\begin{align*}
p\mapsto \sum_{n\in\N} x_n \Phi_n(p),
\end{align*}
is at least continuous (see Lemma~\ref{lem:gPC_convo}), and even analytic when $\eta>1$~\cite[Theorem 8.2]{Tre13}. Thereby, we will often not distinguish between a \emph{sequence} $x=(x_n)_{n\in\N}$ in $\ell^1_\eta$ and the corresponding \emph{function} $x:p\mapsto\sum_{n\in\N} x_n \Phi_n(p)$, and use the same symbol to denote both.
\end{remark}

Given two functions $x(p)$ and $y(p)$ written as gPC expansions, we now want to define a product on the sequence space $\ell^1_\eta(\N,\C)$ corresponding to the multiplication $x(p) y(p)$.

\begin{definition}
\label{def:lin}
Let $\left(\phi_n\right)_{n\in\N}$ be a family of (univariate) real polynomials such that $\phi_n$ is of degree $n$ for all $n$. The linearization coefficients $\left(\alpha^{m,n}_k\right)_{k,m,n\in\N}$ for this family are the real numbers such that
\begin{equation}
\label{eq:lin}
\phi_m\phi_n = \sum_{k=0}^{n+m} \alpha^{m,n}_k \phi_k,\quad \forall~k,n,m \in\N,
\end{equation}
with $\alpha^{m,n}_k = 0$ for all $k>m+n$.
\end{definition}

\begin{definition}
\label{def:convo_gene}
Given linearization coefficients, we define the \emph{generalized convolution product} $\ast$ (associated to the linearization coefficients, or equivalently to the polynomial basis) of two sequences of complex numbers $x=\left(x_n\right)_{n\in\N}$ and $y=\left(y_n\right)_{n\in\N}$ by
\begin{equation*}
\left(x\ast y\right)_k = \sum_{m=0}^\infty\sum_{n=0}^\infty x_m y_n \alpha^{m,n}_k, \qquad \forall~k\in\N.
\end{equation*}
\end{definition}
The generalized convolution product of coefficients corresponds to the pointwise product of functions, at least formally. The Lemma below gives sufficient conditions for the generalized convolution product to be well defined, and for this identification to be justified.
\begin{lemma}
Let $\eta\geq 1$ and $\left(\alpha^{m,n}_k\right)_{k,m,n\in\N}$ be linearization coefficients such that
\begin{align}
\label{eq:sumto1}
\sum_{k=0}^{m+n} \vert \alpha^{m,n}_k\vert = 1, \qquad \forall~m,n\in\N.
\end{align}
Then, for any $x$ and $y$ in $\ell^1_\eta(\N,\C)$, the generalized convolution product $x\ast y$ (associated to the linearization coefficients) is well defined, belongs to $\ell^1_\eta(\N,\C)$, and 
\begin{align*}
\left\Vert x\ast y\right\Vert_{\ell^1_\eta} \leq \left\Vert x\right\Vert_{\ell^1_\eta} \left\Vert y \right\Vert_{\ell^1_\eta},
\end{align*}
i.e., $\ell^1_\eta(\N,\C)$ is a Banach algebra for $\ast$.
\end{lemma}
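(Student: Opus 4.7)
The plan is to carry out a direct submultiplicativity estimate on $\|x \ast y\|_{\ell^1_\eta}$ using only the triangle inequality, the normalization hypothesis~\eqref{eq:sumto1}, and the condition $\eta \geq 1$. The well-definedness of $x \ast y$ will come as a free byproduct, because the same estimate applied to $|x|$ and $|y|$ (componentwise) will show absolute convergence of the series defining each $(x\ast y)_k$.

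First, I would work with the nonnegative double series $\sum_{m,n} |x_m| |y_n| |\alpha^{m,n}_k| \eta^k$ and invoke Tonelli to justify rearranging the order of summation freely. Applying the triangle inequality to Definition~\ref{def:convo_gene} gives
\begin{align*}
\|x \ast y\|_{\ell^1_\eta} \;=\; \sum_{k \in \N} |(x \ast y)_k|\, \eta^k \;\leq\; \sum_{k \in \N} \sum_{m,n \in \N} |x_m|\, |y_n|\, |\alpha^{m,n}_k|\, \eta^k.
\end{align*}
Swapping the sums, using the support condition $\alpha^{m,n}_k = 0$ for $k > m+n$ from Definition~\ref{def:lin}, this becomes
\begin{align*}
\sum_{m,n \in \N} |x_m|\, |y_n| \sum_{k=0}^{m+n} |\alpha^{m,n}_k|\, \eta^k.
\end{align*}

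Next, I would exploit $\eta \geq 1$ to bound $\eta^k \leq \eta^{m+n}$ for $k \leq m+n$, pull $\eta^{m+n} = \eta^m \eta^n$ out of the inner sum, and then apply the normalization hypothesis~\eqref{eq:sumto1} to collapse the inner sum to $1$. This yields
\begin{align*}
\|x \ast y\|_{\ell^1_\eta} \;\leq\; \sum_{m,n \in \N} |x_m|\, |y_n|\, \eta^{m+n} \;=\; \left(\sum_{m \in \N} |x_m|\, \eta^m\right)\!\left(\sum_{n \in \N} |y_n|\, \eta^n\right) \;=\; \|x\|_{\ell^1_\eta}\, \|y\|_{\ell^1_\eta},
\end{align*}
which is both the submultiplicativity inequality and (via the same bound applied termwise in $k$) the justification that each coefficient $(x \ast y)_k$ is given by an absolutely convergent series and that $x \ast y \in \ell^1_\eta(\N,\C)$.

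There is essentially no hard step here: the proof is just bookkeeping, and the nontrivial algebraic content has been packaged into the hypothesis~\eqref{eq:sumto1}. The only minor thing to be careful about is the exchange of summations, which is fine because every term in sight is nonnegative so Tonelli applies unconditionally. The Banach algebra structure on $\ell^1_\eta(\N,\C)$ then follows, with commutativity and associativity of $\ast$ inherited from the commutativity and associativity of the pointwise product of polynomials (encoded in the symmetry $\alpha^{m,n}_k = \alpha^{n,m}_k$ and the compatibility of iterated linearizations of $\phi_m \phi_n \phi_p$).
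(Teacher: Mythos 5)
Your proof is correct and follows essentially the same route as the paper's: a triangle inequality, an exchange of the (nonnegative) sums, and the bound $\eta^k \leq \eta^{m+n}$ on the support $k \leq m+n$ combined with the normalization~\eqref{eq:sumto1} to factor out $\Vert x\Vert_{\ell^1_\eta}\Vert y\Vert_{\ell^1_\eta}$. Your version is in fact slightly more careful than the paper's, since you explicitly invoke Tonelli to justify the rearrangement and note that the same nonnegative estimate gives absolute convergence of each coefficient $(x\ast y)_k$, points the paper leaves implicit.
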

\begin{proof}
We simply use the triangle inequality and exchanges sums:
\begin{align*}
\left\Vert x\ast y\right\Vert_{\ell^1_\eta} &= \sum_{k\in\N} \left\vert \left(x\ast y\right)_k \right\vert \eta^k \\
&\leq \sum_{m\in\N}\sum_{n\in\N} \vert x_m\vert \eta^m\, \vert y_n\vert \eta^n \sum_{k\in\N} \vert \alpha^{m,n}_k\vert \eta^{k-n-m},
\end{align*}
which allows us to conclude since $\alpha^{m,n}_k = 0$ for $k> m+n$ and $\eta\geq 1$.
\end{proof}

\begin{lemma}
\label{lem:gPC_convo}
Let $\left(\phi_n\right)_{n\in\N}$ be either:
\begin{itemize}
\item the monomial basis,
\item the Legendre polynomials $P_n$,
\item the Chebyshev polynomials of the first kind $T_n$,
\item the Chebyshev polynomials of the second kind $U_n$ (normalized so that $U_n(1)=1$),
\item the Gegenbauer polynomials $C_n^{\mu}$ (normalized so that $C_n^\mu(1)=1$).
\end{itemize}
Then, the associated linearization coefficients satisfy~\eqref{eq:sumto1}. In particular, the corresponding generalized convolution product provides $\ell^1_\eta(\N,\C)$ with a Banach algebra structure.

Moreover, for any $\eta\geq 1$ and any $x=\left(x_n\right)_{n\in\N}$ in $\ell^1_\eta(\N,\C)$, the associated function $x(p) = \sum_{n\in\N} x_n \phi_n(p)$ satisfies
\begin{align*}
\left\Vert x \right\Vert_{\cC^0} := \sup_{p\in[-1,1]} \left\vert x(p)\right\vert \leq \left\Vert x\right\Vert_{\ell^1_\eta}. 
\end{align*}
\end{lemma}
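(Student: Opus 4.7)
The plan has two parts. First, I would establish~\eqref{eq:sumto1} by reducing it to the nonnegativity of the linearization coefficients $\alpha^{m,n}_k$ combined with the normalization $\phi_n(1)=1$; the Banach algebra statement then follows immediately from the preceding lemma. Concretely, once one knows $\alpha^{m,n}_k \geq 0$ for all $k,m,n$, evaluating~\eqref{eq:lin} at $p=1$ gives
\begin{equation*}
1 = \phi_m(1)\phi_n(1) = \sum_{k=0}^{m+n} \alpha^{m,n}_k \phi_k(1) = \sum_{k=0}^{m+n} \alpha^{m,n}_k,
\end{equation*}
which, together with nonnegativity, coincides with $\sum_{k=0}^{m+n} \vert\alpha^{m,n}_k\vert = 1$. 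This step also clarifies the role of the normalization: the same argument with a different value of $\phi_n(1)$ would only yield a positive constant in place of $1$, and~\eqref{eq:sumto1} could fail.

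Verifying nonnegativity of the linearization coefficients is thus the key task. It is trivial for the monomial basis, where $\alpha^{m,n}_k = \delta_{k,m+n}$. For Legendre, Chebyshev of the first and second kind, and normalized Gegenbauer polynomials, the bases are all ultraspherical up to normalization, and nonnegativity of their product linearization coefficients is a classical result going back to Dougall and Hsü (see e.g.~\cite{Szw05}). This classical theorem is the one genuinely nontrivial input; everything else is bookkeeping.

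For the $\cC^0$ bound, I would simply observe that for each of the listed bases one has $\sup_{p\in[-1,1]}\vert\phi_n(p)\vert \leq 1 = \phi_n(1)$: this is immediate for monomials and for $T_n(p)=\cos(n\arccos p)$, and is a well-known property of Legendre, normalized Chebyshev of the second kind, and normalized Gegenbauer polynomials that their maximum modulus on $[-1,1]$ is attained at the endpoint $p=1$. The triangle inequality together with $\eta\geq 1$ then gives
\begin{equation*}
\sup_{p\in[-1,1]} \vert x(p)\vert \leq \sum_{n\in\N} \vert x_n\vert \sup_{p\in[-1,1]}\vert\phi_n(p)\vert \leq \sum_{n\in\N} \vert x_n\vert\eta^n = \Vert x\Vert_{\ell^1_\eta}.
\end{equation*}

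The main obstacle is not technical but bibliographical: one must correctly invoke the classical nonnegativity theorem for Gegenbauer linearization coefficients (and the analogous extremality of $\phi_n(1)$) for each basis. Once these inputs are granted, the lemma reduces to the short display above.
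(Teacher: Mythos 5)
Your proposal is correct and follows essentially the same route as the paper: nonnegativity of the linearization coefficients (classical for these ultraspherical-type bases) combined with evaluating~\eqref{eq:lin} at $p=1$ under the normalization $\phi_n(1)=1$ to obtain~\eqref{eq:sumto1}, and then $\sup_{p\in[-1,1]}\vert\phi_n(p)\vert\leq 1$ together with the triangle inequality and $\eta\geq 1$ for the $\cC^0$ bound. The only difference is bibliographical (the paper invokes Gasper's results and explicit formulas from the DLMF rather than Dougall--Hs\"u), which is immaterial.
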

\begin{proof}
The first part of the Lemma is known more generally, for a large class of Jacobi polynomials (see~\cite{Gas70,Gas70bis}), and is based on the nonnegativity of the linearization coefficients. In our context, we have explicit formula for those coefficients in each case~\cite{OlvLozBoiCla10}, which are indeed nonnegative. It then suffices to evaluate~\eqref{eq:lin} at $1$ to get (thanks to the normalization condition)
\begin{align*}
1 = \sum_{k=0}^{m+n} \alpha^{m,n}_k,
\end{align*}
and therefore~\eqref{eq:sumto1}.

The second part of the Lemma is a direct consequence of the fact that, with our choice of normalization, $\left\vert\phi_n(p) \right\vert \leq 1$ for all $p$ in $[-1,1]$, see~\cite{OlvLozBoiCla10}.
\end{proof}

\subsection{Finite dimensional projections}
\label{sec:projection}

In practice, we approximate elements in $\ell^1_\eta(\N,\C)$ by finite dimensional vectors (or equivalently truncated series). 

\begin{definition}
Given $N$ in $\N$, we define the projector $\Pi_N:\C^\N\to\C^\N$ as follows:
\begin{align*}
\left(\Pi_N x\right)_n = 
\left\{
\begin{aligned}
&x_n  \qquad & n< N,\\
&0 \qquad &n\geq N,
\end{aligned}
\right.
\end{align*} 
and
\begin{align*}
\Pi_N \ell^1_\eta(\N,\C) := \left\{ x \in \ell^1_\eta(\N,\C),\ \Pi_N x = x \right\}.
\end{align*}
\end{definition}
We use similar projectors for Fourier series, i.e. $\ell^1$ spaces of sequences indexed by $\Z$. In order not to use too many different notations, we keep the same letter $\Pi$ to also denote these projectors, but make sure to always use the letter $N$ to refer to gPC indices, and $K$ to refer to Fourier indices.
\begin{definition}
Given $K$ in $\N$ and a vector space $X$, we define the projector $\Pi_K:X^\Z\to X^\Z$ as follows:
\begin{align*}
\left(\Pi_K u\right)_k = 
\left\{
\begin{aligned}
&u_k  \qquad & \vert k\vert < K,\\
&0 \qquad &\vert k\vert \geq K,
\end{aligned}
\right.
\end{align*} 
and
\begin{align*}
\Pi_K \ell^1_\eta(\Z,X) := \left\{ u \in \ell^1_\eta(\Z,X),\ \Pi_K u = u \right\}.
\end{align*}
In the case where $X = \ell^1_\eta(\N,\C)$, given $N$ in $\N$ we denote by $\Pi_{K,N}$ the composition of $\Pi_K$ with $\Pi_N$ (applied component-wise). That is, for $u=\left(u_k\right)_{k\in\Z}$ in $\left(\ell^1_\eta(\N,\C)\right)^\Z$,
\begin{align*}
\Pi_{K,N}u = \left(\ldots,0,0,\Pi_N u_{-K+1},\Pi_N u_{-K+2} ,\ldots,\Pi_N u_{K-2},\Pi_N u_{K-1},0,0 \ldots\right),
\end{align*}
and 
\begin{align*}
\Pi_{K,N} \ell^1_\eta(\Z,\ell^1_\eta(\N,\C)) := \left\{ u \in \ell^1_\eta(\Z,\ell^1_\eta(\N,\C)),\ \Pi_{K,N} u = u \right\}.
\end{align*}
\end{definition}

\subsection{Operator norms}
\label{sec:operator_norm}

Controlling operator norms will be crucial in our work, and we are often going to rely on the following statement.
\begin{lemma}
\label{lem:norm_op_split}
Let $X$ be a Banach space with a Schauder basis $\left(e_j\right)_{j\in\N}$ and a norm $\left\Vert \cdot \right\Vert$ of the form
\begin{align}
\label{eq:norm_op}
\left\Vert \sum_{j\in\N} x_j e_j \right\Vert = \sum_{j\in\N} \vert x_j\vert w_j,
\end{align}
for some prescribed weights $w_j>0$. Then, for any bounded linear operator $B$ on $X$, 
\begin{align*}
\left\Vert B\right\Vert = \sup_{j\in\N} \frac{1}{w_j} \left\Vert B e_j\right\Vert.
\end{align*}
Moreover, for any disjointed subsets $I$ and $J$ of $\N$ such that $I\cup J = \N$, if we denote by $X_I$ and $X_J$ the subspaces of $X$ having $\left(e_j\right)_{j\in I}$ and $\left(e_j\right)_{j\in J}$ as Schauder bases,
\begin{align}
\label{eq:norm_op_split}
\left\Vert B \right\Vert = \max\left(\sup_{\substack{x \in  X_I \\ x\neq 0}} \frac{\left\Vert Bx \right\Vert}{\left\Vert x \right\Vert},\ \sup_{\substack{x \in X_J \\ x\neq 0}} \frac{\left\Vert Bx \right\Vert}{\left\Vert x \right\Vert}\right).
\end{align}
\end{lemma}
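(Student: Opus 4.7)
The plan is to establish the explicit formula $\|B\| = \sup_j \|Be_j\|/w_j$ first, and then deduce the splitting identity \eqref{eq:norm_op_split} as a direct corollary, since no extra work beyond rearranging a supremum will be needed once the first identity is in hand.

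For the first identity, I would start with the lower bound. Evaluating the norm formula \eqref{eq:norm_op} on $e_j$ itself gives $\|e_j\| = w_j$, so $\|B\| \geq \|Be_j\|/w_j$ holds for every $j\in\N$, and taking the supremum over $j$ yields one inequality. For the matching upper bound, I would expand an arbitrary $x = \sum_{j} x_j e_j \in X$, apply the triangle inequality to get $\|Bx\| \leq \sum_j |x_j| \|Be_j\|$, and then factor the ratio $\|Be_j\|/w_j$ out of the sum:
\begin{align*}
\|Bx\| \;\leq\; \sum_j |x_j| w_j \cdot \frac{\|Be_j\|}{w_j} \;\leq\; \Bigl(\sup_{j\in\N}\frac{\|Be_j\|}{w_j}\Bigr) \sum_j |x_j| w_j \;=\; \Bigl(\sup_{j\in\N}\frac{\|Be_j\|}{w_j}\Bigr) \|x\|.
\end{align*}
This step uses in an essential way the $\ell^1$-additive form of the norm; without it, one could only pull the supremum out via a weaker estimate.

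For the splitting formula, the key observation is that the same norm structure \eqref{eq:norm_op} passes to the subspaces $X_I$ and $X_J$, and that the ambient norm is additive across the splitting: $\|x_I + x_J\| = \|x_I\| + \|x_J\|$ whenever $x_I \in X_I$ and $x_J \in X_J$. Applying the first part of the lemma to the restrictions of $B$ to $X_I$ and to $X_J$ (which is valid verbatim, since the computation above works for any bounded operator out of a space with norm of the form \eqref{eq:norm_op}) gives
\begin{align*}
\sup_{\substack{x\in X_I\\x\neq 0}} \frac{\|Bx\|}{\|x\|} = \sup_{j\in I}\frac{\|Be_j\|}{w_j}, \qquad \sup_{\substack{x\in X_J\\x\neq 0}} \frac{\|Bx\|}{\|x\|} = \sup_{j\in J}\frac{\|Be_j\|}{w_j}.
\end{align*}
Since $I$ and $J$ partition $\N$, one has $\sup_{j\in\N} = \max(\sup_{j\in I},\sup_{j\in J})$, and combining with the first identity immediately yields \eqref{eq:norm_op_split}.

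I don't expect a real obstacle: the argument is essentially bookkeeping around the $\ell^1$-type norm. The only subtle point is the second inequality in the upper bound computation, where the supremum is pulled outside the sum; this is precisely where the additive form of the norm is used, and it is also what makes the splitting identity clean rather than merely a two-sided estimate.
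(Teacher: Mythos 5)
Your proof is correct and follows essentially the same route as the paper: the same triangle-inequality computation to pull $\sup_{j\in\N}\frac{1}{w_j}\Vert Be_j\Vert$ out of the $\ell^1$-type sum for the upper bound, the evaluation $\Vert e_j\Vert = w_j$ for the lower bound, and the reduction of \eqref{eq:norm_op_split} to the elementary identity $\sup_{j\in\N} = \max\left(\sup_{j\in I},\,\sup_{j\in J}\right)$. The only difference is that you spell out, by applying the first part to the restrictions of $B$ to $X_I$ and $X_J$, a step the paper's proof leaves implicit.
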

\begin{proof}
For any $x=\sum_{j\in\N} x_j e_j$ in $X$, we simply use the triangle inequality to get
\begin{align*}
\left\Vert Bx \right\Vert &\leq \sum_{j\in\N} \vert x_j\vert \left\Vert Be_j\right\Vert \leq \left(\sup_{j\in\N} \frac{1}{w_j} \left\Vert B e_j\right\Vert \right) \sum_{j\in\N} \vert x_j\vert w_j,
\end{align*}
hence $\left\Vert B\right\Vert \leq \sup_{j\in\N} \frac{1}{w_j} \left\Vert B e_j\right\Vert$. However, for any $j$ in $\N$,
\begin{align*}
\frac{1}{w_j} \left\Vert B e_j\right\Vert = \frac{\left\Vert B e_j\right\Vert}{\left\Vert e_j\right\Vert} \leq \left\Vert B\right\Vert,
\end{align*}
which proves~\eqref{eq:norm_op}. The identity~\eqref{eq:norm_op_split} then simply amounts to
\begin{equation*}
\sup_{j\in\N} \frac{1}{w_j} \left\Vert B e_j\right\Vert = \max\left( \sup_{j\in I} \frac{1}{w_j} \left\Vert B e_j\right\Vert,\, \sup_{j\in J} \frac{1}{w_j} \left\Vert B e_j\right\Vert \right). \hfill \qedhere
\end{equation*}
\end{proof}

\subsection{A kind of Newton-Kantorovich theorem}
\label{sec:NK}

As is the case in many works on validated numerics, a crucial tool in our argument is a kind of Newton-Kantorovich theorem~\cite{Ort68}, which allows us to validate a posteriori a numerically obtained solution. 

Given a map $\F$ defined on a Banach space $\X$, and an \emph{approximate} zero $\bx$ of $\F$, this theorem provides us with sufficient conditions guaranteeing the existence of a \emph{genuine} zero $x^*$ of $\F$ near $\bx$, together with explicit error bounds between $\bx$ and $x^*$.

\begin{theorem}
\label{th:NK}
Let $\X$ and $\Y$ be Banach spaces, $\F$ be a $\cC^1$ map from $\X$ to $\Y$, $\bx$ an element of $\X$, $A$ a linear injective map from $\Y$ to $\X$, and $r^*$ in $(0,+\infty]$. Assume there exist nonnegative constants $Y$, $Z_1$ and $Z_2$ such that
\begin{subequations}
\label{eq:YZ12}
\begin{align}
\left\Vert A\F(\bx) \right\Vert_\X &\leq Y \label{eq:Y}\\
\left\Vert I - AD\F(\bx) \right\Vert_\X  &\leq Z_1 \label{eq:Z1}\\
\left\Vert A\left(D\F(x) - D\F(\bx)\right) \right\Vert_\X &\leq Z_2 \left\Vert x-\bx \right\Vert_\X \qquad \forall~x\in\B_\X(\bx,r^*), \label{eq:Z2}
\end{align}
\end{subequations}
where $D\F$ denotes the Fr\'echet derivative of $\F$, $\left\Vert\cdot\right\Vert_\X$ simultaneously denotes the norm on $\X$ and the associated operator norm, and $\B_\X(\bx,r^*)$ is the closed ball of center $\bx$ and radius $r^*$ in $\X$.
If these constants satisfy
\begin{subequations}
\label{eq:cond}
\begin{align}
Z_1 &< 1 \label{eq:cond1}\\
2YZ_2 &< (1-Z_1)^2, \label{eq:cond2}
\end{align}
\end{subequations}
then, for any $r$ satisfying 
\begin{align}
\label{eq:r}
\frac{1-Z_1 - \sqrt{(1-Z_1)^2-2YZ_2}}{Z_2} \leq r < \min\left(\frac{1-Z_1}{Z_2}, r^*\right)
\end{align}
there exists a unique zero $x^*$ of $\F$ in $\B_\X(\bx,r)$.
\end{theorem}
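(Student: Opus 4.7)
The plan is to apply the Banach contraction principle to the Newton-like operator $T : \X \to \X$ defined by $T(x) = x - A\F(x)$. Because $A$ is injective, fixed points of $T$ correspond bijectively to zeros of $\F$, so it is enough to produce a unique fixed point of $T$ in $\B_\X(\bx,r)$.

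The master estimate I would establish first is a bound on $DT(z) = I - AD\F(z)$. Combining~\eqref{eq:Z1} and~\eqref{eq:Z2} via the triangle inequality yields $\left\Vert DT(z) \right\Vert_\X \leq Z_1 + Z_2 \left\Vert z - \bx \right\Vert_\X$ for every $z \in \B_\X(\bx, r^*)$. Using the mean value identity $T(x) - T(y) = \int_0^1 DT\bigl(y + s(x-y)\bigr)(x-y)\, ds$ and integrating the bound over $s\in[0,1]$, I obtain, for any $x, y \in \B_\X(\bx, r)$ with $r \leq r^*$,
\begin{align*}
\left\Vert T(x) - T(y) \right\Vert_\X \leq \left(Z_1 + \tfrac{Z_2}{2}\bigl(\left\Vert x - \bx \right\Vert_\X + \left\Vert y - \bx \right\Vert_\X\bigr)\right) \left\Vert x - y \right\Vert_\X.
\end{align*}

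From this master estimate I would extract the two ingredients needed by Banach's theorem. First, when both $x, y$ range in $\B_\X(\bx, r)$, the resulting Lipschitz constant is at most $Z_1 + Z_2 r$, which is strictly less than $1$ as soon as $r < (1-Z_1)/Z_2$. Second, specializing $y = \bx$ and using $\left\Vert T(\bx) - \bx \right\Vert_\X = \left\Vert A\F(\bx) \right\Vert_\X \leq Y$ from~\eqref{eq:Y}, the triangle inequality gives $\left\Vert T(x) - \bx \right\Vert_\X \leq Y + Z_1 r + \tfrac{Z_2}{2} r^2$, so $T$ maps $\B_\X(\bx,r)$ into itself as soon as the quadratic inequality $\tfrac{Z_2}{2} r^2 - (1-Z_1) r + Y \leq 0$ holds, equivalently $r \in [r_-, r_+]$ with $r_{\pm} = \bigl((1-Z_1) \pm \sqrt{(1-Z_1)^2 - 2YZ_2}\bigr)/Z_2$.

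It then remains to match these two conditions with~\eqref{eq:cond} and~\eqref{eq:r}. Assumption~\eqref{eq:cond2} makes the radicand strictly positive, which in turn ensures $r_- < (1-Z_1)/Z_2 < r_+$, so the contraction and self-mapping intervals overlap and the admissible range of $r$ is exactly the one stated in~\eqref{eq:r}; the cap $r < r^*$ just guarantees that the $Z_2$ bound is valid on the whole ball $\B_\X(\bx,r)$. Banach's fixed point theorem then yields a unique $x^* \in \B_\X(\bx,r)$ with $T(x^*) = x^*$, and the injectivity of $A$ promotes $A\F(x^*) = 0$ to $\F(x^*) = 0$. The only delicate bookkeeping is tracking the factor $1/2$ coming from the $s$-integration in the mean value estimate: it is precisely this factor that shapes~\eqref{eq:cond2} and the lower endpoint in~\eqref{eq:r}, and getting it right (as opposed to, say, using the cruder bound $\|DT(z)\|_\X \le Z_1 + Z_2 r$ on the whole ball) is what gives the sharp version of the theorem.
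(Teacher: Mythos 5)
Your proof is correct and follows essentially the same route as the paper, which omits the argument and refers to~\cite{BerBreLesVee21} for a contraction-mapping proof applied to the Newton-type operator $x\mapsto x - D\F(\bx)^{-1}\F(x)$; your variant contracts $x\mapsto x - A\F(x)$ directly, which avoids the preliminary Neumann-series step showing that $Z_1<1$ makes $AD\F(\bx)$, and hence $D\F(\bx)$, invertible, and instead uses the injectivity of $A$ exactly where it is needed to pass from $A\F(x^*)=0$ to $\F(x^*)=0$. Your bookkeeping is also right where it matters: the factor $\tfrac{Z_2}{2}$ from the $s$-integration in the mean value estimate is what yields the sharp condition~\eqref{eq:cond2} and the radius interval~\eqref{eq:r}, while the cruder uniform bound $Z_1+Z_2 r$ correctly remains sufficient for the contraction constant on $\B_\X(\bx,r)$.
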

As previously mentioned, similar results already appeared many times, especially in the computer-assisted proof literature (see, e.g.,~\cite{AriKocTer05,DayLesMis07,Plu92,Yam98}), and we refer to~\cite{BerBreLesVee21} for a detailed proof, which merely consists in applying the contraction mapping theorem to $x\mapsto x - D\F(\bx)^{-1} \F(x)$.

\begin{remark}
In practice, applying this theorem requires two main ingredients: a good enough approximate solution $\bx$ so that $Y$ is small enough, and a good enough approximate inverse $A$ of $D\F(\bx)$ so that~\eqref{eq:cond1} holds. For a given $Z_1$ satisfying~\eqref{eq:cond1} and $Z_2$, the condition~\eqref{eq:cond2} tells us in a quantitative way how small $Y$ has to be (and therefore, in some sense, how good of an approximate solution $\bx$ has to be), for the existence of a nearby true solution to be guaranteed. Let us also mention that the injectivity assumption for $A$ is usually automatically satisfied as soon as~\eqref{eq:Z1} and~\eqref{eq:cond1} hold, thanks to some structural properties of $A$, as we will see whenever we apply Theorem~\ref{th:NK} in this work.

Conceptually, defining a suitable $A$, which is not only a good approximate inverse of $D\F(\bx)$ but also \emph{simple} enough that all the estimates~\eqref{eq:YZ12} can be obtained, is often the crucial part. For deterministic problems, say a periodic orbit in~\eqref{eq:RDE_intro} for a given value of $p$, this $A$ is often defined as a finite rank perturbation of a somewhat simple operator (for instance a diagonal operator). In this work, we will see how to generalize this construction to parameter dependent problems.

Finally, let us point out that the codomain $\Y$ of $\F$ is inconsequential, as it does not appear anywhere in the estimates~\eqref{eq:YZ12}. The only thing that really matters is that the composition $A\F$ does map $\X$ into itself.
\end{remark}

\subsection{A basic example}
\label{sec:BasicExample}

In this subsection, we showcase on a very simple example how the Banach algebra structure of gPC expansions presented in Section~\ref{sec:gene_convo_prod} and Theorem~\ref{th:NK} can be combined to provide a fully rigorous uncertainty quantification for an algebraic problem.

Let $p$ be a uniform random variable on $[-1,1]$. Assume we are given a function $g$ as a truncated Legendre series:
\begin{align}
\label{eq:g_ex}
g(p) = \sum_{n=0}^{N_g-1} g_n P_n(p),
\end{align}
with some given $N_g$ and coefficients $\left(g_n\right)_{0\leq n\leq N_g}$ such that $g$ is positive on $[-1,1]$, and that we want to compute $x=x(p) = \sqrt{g(p)}$. For a given $g$, and a given truncated Legendre series $\bx=\bx(p)$ approximating $\sqrt{g(p)}$, we are going to derive a fully computable a error bound between this approximate gPC representation and the true object $\sqrt{g(p)}$. While this example in itself is of limited interest, the techniques we use to solve it generalize very well to more complicated and interesting problems, in particular when we do not have a closed-form expression for $x$ in terms of $p$, as we will see in the remaining sections of the paper.

\begin{proposition}
\label{prop:ex}
Consider the function $g$ of the form~\eqref{eq:g_ex} with $N_g=6$, $g_0=2$, $g_1=-1$, $g_5 = -1/2$ and $g_n=0$ otherwise. Let $\bx=\bx(p) = \sum_{n=0}^{5} \bx_n \phi_n(p)$, where the coefficients $\bx_n$ are given in Table~\ref{tab:ex}. Both $g$ and $\bx$ are represented in Figure~\ref{fig:ex}. 

There exists a unique $x^*$ in $\ell^1_1(\N,\R)$ satisfying $x^*(p) = \sqrt{g(p)}$ for all $p$ in $[-1,1]$ and $\left\Vert \bx - x^*\right\Vert_{\ell^1_1} \leq 0.074$.
\end{proposition}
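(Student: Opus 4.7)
The plan is to apply Theorem~\ref{th:NK} to the zero-finding problem
\begin{equation*}
\F(x) = x \ast x - g, \qquad \F:\ell^1_1(\N,\R) \to \ell^1_1(\N,\R),
\end{equation*}
where $\ast$ denotes the generalized convolution product associated to the Legendre basis. By Lemma~\ref{lem:gPC_convo}, $\F$ is a well-defined $\cC^\infty$ map on $\ell^1_1$, with Fr\'echet derivative $D\F(\bx)h = 2\,\bx \ast h$, and any zero $x^*$ of $\F$ corresponds (via the continuous embedding into $\cC^0([-1,1])$) to a continuous function satisfying $(x^*)^2 = g$; the sign of $\bx_0>0$ selects the positive root, and uniqueness of the Legendre coefficients makes $x^*$ unique in $\ell^1_1$ within the ball produced by the theorem.

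Next I would build the approximate inverse $A$ of $D\F(\bx)$ in the now-standard finite-rank-plus-tail form. Since $\bx$ has only $6$ nonzero coefficients, multiplication by $2\bx$ is represented, on modes $n \geq N$ with $N\geq 6$, by a band-limited perturbation of the scalar operator $2\bx_0 \,I$. I would therefore fix some $N$ (e.g.\ $N = 11$), compute numerically an approximate inverse $A_N$ of the $N\times N$ matrix representing $\Pi_N D\F(\bx) \Pi_N$, and set
\begin{equation*}
A = A_N \Pi_N + \frac{1}{2\bx_0}(I - \Pi_N).
\end{equation*}
Injectivity of $A$ follows from invertibility of $A_N$ (checked numerically with interval arithmetic) together with $\bx_0 \neq 0$.

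The three bounds in~\eqref{eq:YZ12} are then obtained as follows. For $Y$: the residual $\F(\bx)=\bx\ast\bx - g$ is a sequence of length at most $11$ (degree $10$ from $\bx\ast\bx$, degree $5$ from $g$), so $\|A\F(\bx)\|_{\ell^1_1}$ reduces to a finite rigorous computation. For $Z_1$: using Lemma~\ref{lem:norm_op_split} applied to $I - AD\F(\bx)$ with the splitting $I = \Pi_N \oplus (I-\Pi_N)$, the finite part is a rigorous norm of an $N\times N$ matrix, while on the tail the operator acts as $I - \frac{1}{2\bx_0}(2\bx \ast \cdot) = \bigl(\delta_0 - \tfrac{\bx}{\bx_0}\bigr)\ast \cdot$ (up to a controlled leakage from high modes into $\Pi_N$, which is also finite and explicit because $\bx$ has finite support); by the Banach algebra property this tail contribution is bounded by $\sum_{n\geq 1}|\bx_n/\bx_0|$. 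For $Z_2$: from $D\F(x)h - D\F(\bx)h = 2(x-\bx)\ast h$ and the Banach algebra estimate one gets $Z_2 = 2\|A\|_{\ell^1_1\to\ell^1_1}$, where $\|A\|$ is computed by the same finite-plus-tail splitting (the tail gives $1/(2\bx_0)$), and one may take $r^* = +\infty$ since $\F$ is entire.

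Once these constants are produced, verifying $Z_1<1$ and $2YZ_2<(1-Z_1)^2$ is a final finite (interval-arithmetic) check, and the radius $0.074$ is obtained from~\eqref{eq:r}. The main obstacle is not conceptual but quantitative: one must choose $N$ and the numerical $A_N$ well enough that the Banach algebra overestimate on the tail, combined with the truncation error of $\bx$, leaves enough room in $(1-Z_1)^2$ to accommodate $Y$ and still yield $r \leq 0.074$. This is essentially tuning $\bx$ (it must be a sufficiently accurate Legendre approximation of $\sqrt{g}$) and $N$ against the smallness of $\bx_1/\bx_0,\bx_2/\bx_0,\ldots$ that controls the tail portion of $Z_1$.
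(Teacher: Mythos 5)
Your overall strategy coincides with the paper's up to one decisive choice: you apply Theorem~\ref{th:NK} to the same map $F(x)=x\ast x-g$ on $\ell^1_1(\N,\R)$ with the Legendre convolution, and your identification of the zero with $\sqrt{g}$ (embedding into $\cC^0$ from Lemma~\ref{lem:gPC_convo}, positivity of $g$, sign selection, uniqueness of Legendre coefficients) is fine. The divergence is in the approximate inverse. The paper does \emph{not} use a finite-rank-plus-scalar-tail operator here: it takes $A$ to be itself a multiplication operator, $Ax = a\ast x$, where $a\in\Pi_{6}\ell^1_1$ is a numerically computed approximate reciprocal of $2\bx$. Then $I-ADF(\bx)$ is multiplication by $1-2a\ast\bx$ and $A(DF(x)-DF(\bx))$ is multiplication by $2a\ast(x-\bx)$, so the three bounds $Y=\left\Vert a\ast(\bx\ast\bx-g)\right\Vert$, $Z_1=\left\Vert 1-2a\ast\bx\right\Vert$, $Z_2=\left\Vert 2a\right\Vert$ involve only polynomials and are exactly computable, with no splitting and no tail estimate at all; injectivity of $A$ follows since $Z_1<1$ makes $2a\ast\bx$, hence $a$, invertible in the commutative ring $\ell^1_1$. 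This yields $Z_1\approx 0.0754$, $Y\approx 0.0651$, $Z_2\approx 1.1838$, and $r_{min}\approx 0.07391$, just barely below the stated $0.074$.

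Your construction cannot be expected to reach that radius with the prescribed $\bx$, and this is a genuine gap, not mere tuning. On a tail column $e_j$ with $j\geq N+5$ there is no leakage into $\Pi_N$, so your defect operator acts as multiplication by $\delta_0-\bx/\bx_0$, whose coefficients $(0,\,0.2590,\,0.0246,\,0.0073,\,0.0182,\,0.1320)$ (approximately) are \emph{all nonnegative}; since the Legendre linearization coefficients are nonnegative and sum to $1$, and $\eta=1$, there is no cancellation and each such column has norm exactly $\sum_{n\geq 1}\vert\bx_n\vert/\bx_0\approx 0.441$, \emph{independently of $N$}. This is precisely the phenomenon the paper isolates in the remark of Section~\ref{sec:A}: the gPC direction has no regularization, so one may not truncate there when building $A$; that direction must be handled by multiplication operators (the role played by $a$ here). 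Consequently your $Z_1\geq 0.44$ no matter how $N$ and $A_N$ are chosen, and since \eqref{eq:r} forces
\begin{align*}
r_{min} \;=\; \frac{1-Z_1-\sqrt{(1-Z_1)^2-2YZ_2}}{Z_2} \;\geq\; \frac{Y}{1-Z_1},
\end{align*}
a residual of the size fixed by the given $\bx$ (the paper's $Y\approx 0.065$; your $A_N$ would produce a comparable value) lands you at $r_{min}\gtrsim 0.11$, well above $0.074$. You would need $Y\lesssim 0.04$, and you provide no mechanism for that; your closing suggestion to tune $\bx$ is unavailable, since $\bx$ is fixed by Table~\ref{tab:ex}. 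The repair is exactly the paper's device: take the approximate inverse to be a convolution operator $a\ast\cdot$ with $a\approx(2\bx)^{-1}$, so that $Z_1$ is the truncation error of the reciprocal rather than the size of the non-constant part of $\bx$.
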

\begin{proof}
We consider $\eta= 1$, and the map $F:\ell^1_\eta(\N,\R) \to \ell^1_\eta(\N,\R)$ defined by
\begin{align*}
F(x) = x\ast x - g  \qquad \forall~x\in\ell^1_\eta,
\end{align*}
where we identify the function $g$ with its sequence of Legendre coefficients, and $\ast$ is the generalized convolution product associated to the Legendre basis.

We are going to apply Theorem~\ref{th:NK} to the map $F$, with $\X=\Y=\ell^1_\eta(\N,\R)$, and $\bx$ as an approximate solution (in practice $\bx$ was obtained by using Newton's method on the finite dimensional projection $\Pi_{N_g}\, F\, \Pi_{N_g}$ of $F$). In order to do so, we first need to define a linear map $A$ on $\ell^1_\eta(\N,\R)$, which should be a \emph{good enough} approximate inverse of $DF(\bx)$, in the sense that~\eqref{eq:Z1} should be satisfied with $Z_1<1$. Since $DF(\bx)$ is nothing but the multiplication operator $x\mapsto (2\bx)\ast x$, we compute numerically an element $a$ in $\Pi_{N_g}\ell^1_\eta$ such that $2\bx\ast a \approx 1$ (see Table~\ref{tab:ex}), and define $A$ as the multiplication operator by $a$, i.e.
\begin{align*}
A x = a\ast x \qquad \forall~x\in\ell^1_\eta.
\end{align*}
We are now ready to derive bounds $Y$, $Z_1$ and $Z_2$ satisfying assumption~\eqref{eq:YZ12}.
\begin{itemize}
\item For the $Y$ bound, we simply have
\begin{align*}
AF(\bx) = a\ast\left(\bx\ast\bx - g \right).
\end{align*}
Since all the elements involved, namely $a$, $\bx$ and $g$, belong to $\Pi_{N_g} \ell^1_\eta$, or equivalently are polynomials of degree at most $N_g-1$, $AF(\bx)$ is a polynomial of degree at most $3(N_g-1)$. Its coefficients, and therefore its norm, can thus be computed explicitly, and we can take $Y = \left\Vert a\ast\left(\bx\ast\bx - g \right) \right\Vert_{\ell^1_\eta}$.
\item For the $Z_1$ bound, notice that $I-ADF(\bx)$ is simply the multiplication operator by $1-a\ast(2\bx)$. Therefore its operator norm is equal to the norm of $1-a\ast(2\bx)$, which can also be computed explicitly, and we can take $Z_1 = \left\Vert 1-2a\ast\bx \right\Vert_{\ell^1_\eta}$.
\item Finally, for the $Z_2$ bound, $A\left(D\F(x) - D\F(\bx)\right)$ is the multiplication operator by $2a\ast(x - \bx)$, therefore we can take $Z_2 = \left\Vert 2a \right\Vert_{\ell^1_\eta}$ and $r^*=+\infty$.
\end{itemize}
In principle, one could evaluate the obtained bounds $Y$, $Z_1$ and $Z_2$ for $a$, $\bx$ and $g$ given in Table~\ref{tab:ex} by hand, and then check whether the conditions~\eqref{eq:cond} hold. To do it by hand would of course be a waste of time, and become very impractical for higher dimensional problems. Therefore, we evaluate these bounds with a computer, but with interval arithmetic~\cite{Moo79,Tuc11} rather than the usual floating point arithmetic, so as to control rounding errors and ensure that the numbers we obtain for $Y$, $Z_1$ and $Z_2$ do satisfy~\eqref{eq:YZ12}. We obtain
\begin{align*}
Y = 0.06509919865113,\quad Z_1 = 0.07544522585888,\quad Z_2 = 1.18378588092726.
\end{align*}
Hence~\eqref{eq:cond1} holds, and we check that~\eqref{eq:cond2} holds as well. Moreover, the definition of $Z_1$ together with~\eqref{eq:cond1} imply that $a\ast\bx$ is invertible, therefore so is $a$ (because $\ell^1_\eta$ is a commutative ring), and we do have that $A$ in injective. We can thus apply Theorem~\ref{th:NK}, and according to~\eqref{eq:r}, there exists a unique zero $x^*$ of $F$ in $\B_{\ell^1_\eta}(\bx,r)$ for all $r\in[r_{min},r_{max})$, with
\begin{align*}
r_{min} = 0.073908425226395,\qquad r_{max} = 0.781015206412929.
\end{align*}
The computational parts of the proof can be reproduced by downloading the Matlab code at~\cite{BreGit}, and running \texttt{script\_BasicExample.m} (in order to get a rigorous proof including rounding error control, you also need Intlab~\cite{Rum99}).
\end{proof}

\begin{figure}[!h]
\begin{floatrow}
\ffigbox{
	\includegraphics[scale=0.4]{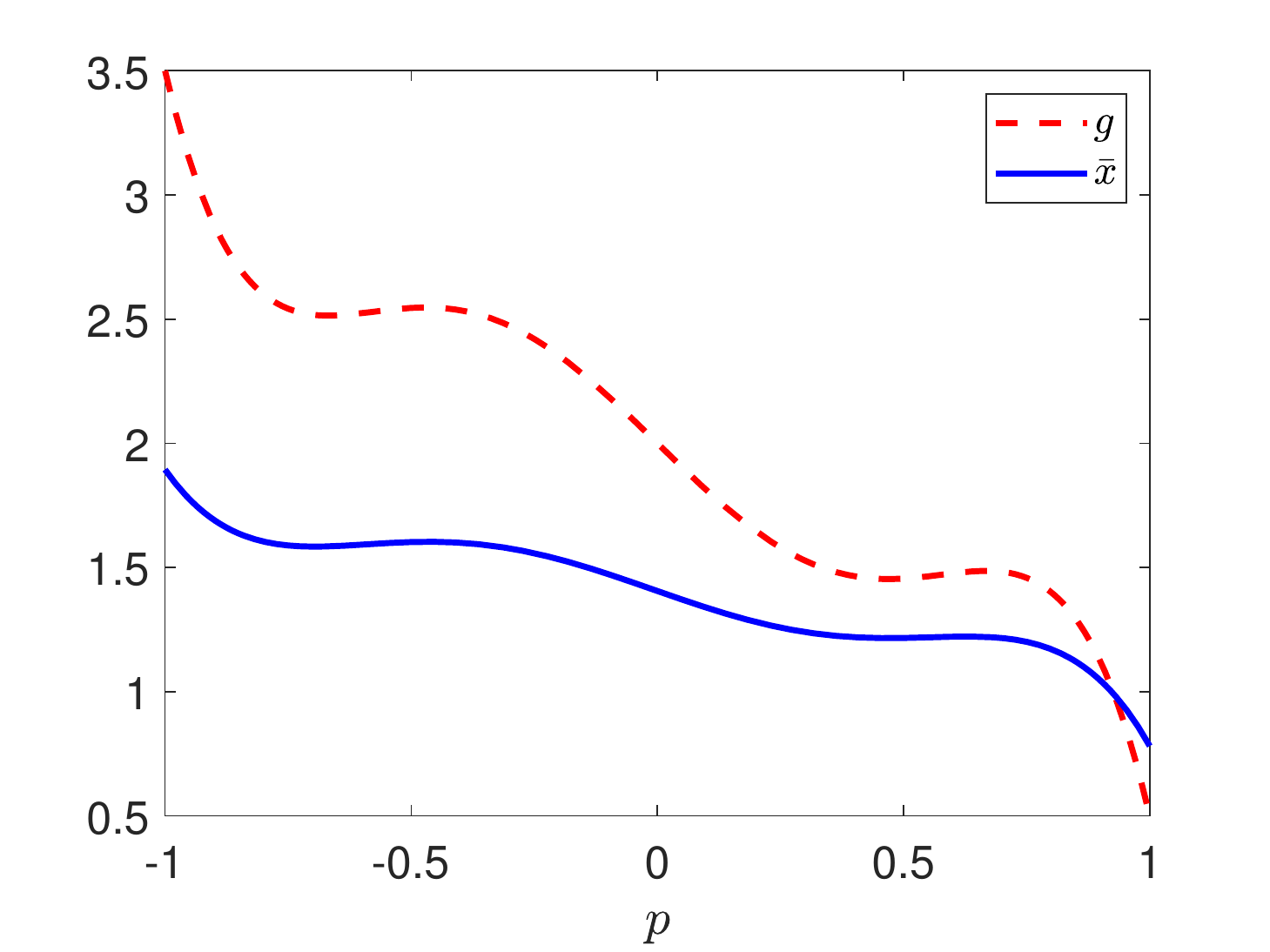}
}
{
	\caption{The input function $g$ and the approximate solution $\bar{x}$ which is validated a posteriori in Proposition~\ref{prop:ex}.}
	\label{fig:ex}
}
\capbtabbox{
	\centering
	\begin{tabular}{c||c|c|}
	$n$ & $\bx$ & $a$ \\
	\midrule[2pt]
	0 & 1.397466142483791 & 0.367312683971841\\
	\midrule
	1 & -0.361966926543100 & 0.100133468883877\\
	\midrule
	2 & -0.034437671606020 & 0.030295726801934\\
	\midrule
	3 & -0.010225992059514 & 0.014887942068709\\
	\midrule
	4 & -0.025424456095587 & 0.023944696446413\\
	\midrule
	5 & -0.184395889766637 & 0.055318422290860\\
	\bottomrule[2pt]
	\end{tabular}
}
{
	\caption{The coefficients of $\bx$ and $a$ used in Proposition~\ref{prop:ex} and in its proof.}
	\label{tab:ex}
}
\end{floatrow}
\end{figure}

\begin{remark}
Let us assume again that $p$ is a uniform random variable on $[-1,1]$. If we had only computed $\bx$ numerically as an approximation of $\sqrt{g(p)}$, we would ``know'' for instance that
\begin{align*}
\E\left(\sqrt{g(p)}\right) \approx \E\left(\bx(p)\right) = \bx_0 = 1.397466...,
\end{align*} 
but without any guarantee regarding the precision of the approximation. With Proposition~\ref{prop:ex}, we get a fully guaranteed (even rounding errors are accounted for) a posteriori estimate controlling the distance between $\sqrt{g(p)}$ and $\bx$, which proves that 
\begin{align*}
\left\vert \E\left(\sqrt{g(p)}\right) - \bx_0 \right\vert \leq r_{min}\approx 0.074.
\end{align*}
It turns out that this estimate is rather conservative, but more precise results can be obtained by looking for $\bx$ (and then for $a$), in a larger dimensional subspace. For instance, if we look for $\bx$ in $\Pi_N\ell^1_\eta$ with $N=20$ instead of $N=6$, we already get an error bound $r_{min}$ of the order of $10^{-4}$, and with $N=50$ we get again a much more precise approximation, together with a validation radius $r_{min}$ of less than $10^{-8}$, which tells us in particular that
\begin{align*}
\E\left(\sqrt{g(p)}\right) = 1.39729844 \pm 10^{-8}.
\end{align*}
For more details, simply run \texttt{script\_BasicExample.m} with different values of $N$.
\end{remark}

\section{A posteriori validation of parameter-dependent periodic orbits for the Lorenz system}
\label{sec:Lorenz}

In this section, we study periodic orbits in the Lorenz system

\begin{equation}
\label{eq:Lorenz}
\left\{\begin{aligned}
x'&=\sigma(y-x) \\
y'&=\rho x-y-xz \\
z'&=-\beta z +xy,
\end{aligned}\right.
\end{equation}
with the \emph{usual} parameter values $\sigma = 10$, $\beta = 8/3$, but assuming $\rho$ is of the form
\begin{align}
\label{eq:Lorenz_rho}
\rho = \brho + \delta p,
\end{align}
where $\brho=28$, $\delta\geq 0$ is a given constant, and $p$ varies in $[-1,1]$. We can either think of $p$ as being a random variable taking values in $[-1,1]$, in which case the $p$-depending periodic orbits are random periodic orbits, or consider a deterministic continuation problem in $p$.

\subsection{Setting}
\label{sec:Setting_Lorenz}

We follow the framework developed in~\cite{BreKue20} for the approximation of random periodic orbits based on Fourier$\times$gPC expansions, and extend it to include the a posteriori validation of the obtained truncated expansions. We look for $x$, $y$ and $z$ of the form
\begin{align}
\label{eq:exp_Lorenz_Four}
x(t,p) = \sum_{k\in\Z} x_k(p) e^{\txti\Omega(p) t},
\end{align}
where each Fourier coefficient $x_k(p)$ as well as the unknown frequency $\Omega(p)$ are also expanded as series, this time with the appropriate gPC basis
\begin{align}
\label{eq:exp_Lorenz_gPC}
x_k(p) = \sum_{n\in\N} x_{k,n} \phi_n(p),\qquad \Omega(p) = \sum_{n\in\N} \Omega_n \phi_n(p).
\end{align}

A natural space in which to apply Theorem~\ref{th:NK} so as to validate an approximate solution $\bX=(\bO,\bx,\by,\bz)$ is then given by
\begin{align}
\label{eq:X_Lorenz}
\X = \ell^1_\eta(\N,\C) \times \left(\ell^1_\nu\left(\Z,\ell^1_\eta(\N,\C)\right)\right)^3,
\end{align}
for some $\nu,\eta\geq 1$ to be specified later. For $X=(\Omega,x,y,z)$ in $\X$, we consider the norm
\begin{align*}
\left\Vert X \right\Vert_\X := \vert \Omega\vert + \left\Vert x \right\Vert_{\ell^1_\nu(\Z,\ell^1_\eta(\N,\C))} + \left\Vert y \right\Vert_{\ell^1_\nu(\Z,\ell^1_\eta(\N,\C))} + \left\Vert z \right\Vert_{\ell^1_\nu(\Z,\ell^1_\eta(\N,\C))}.
\end{align*}
We assume that the linearization cofficients of the chosen gPC basis satisfy~\eqref{eq:sumto1}, so that Lemma~\ref{lem:gPC_convo} and then Lemma~\ref{lem:BanachBanach} apply, i.e. $\ell^1_\eta(\N,\C)$ is a Banach algebra for the generalized convolution product $\ast$, and then $\ell^1_\nu\left(\Z,\ell^1_\eta(\N,\C)\right)$ is itself a Banach algebra for the convolution product $\circledast$.

%We recall that we identify $x$, $y$ and $z$ with their sequence of Fourier$\times$gPC coefficients in  $\ell^1_\nu\left(\Z,\ell^1_\eta(\N,\C)\right)$. 
Introducing the linear operator $\K$, which to any sequence $x\in\ell^1_\nu\left(\Z,\ell^1_\eta(\N,\C)\right)$ associates the sequence $\K x$ defined as 
\begin{align}
\label{eq:K}
\left(\K x\right)_k = k x_k \qquad \forall~k\in\Z,
\end{align} 
and plugging the Ansatz~\eqref{eq:exp_Lorenz_Four}-\eqref{eq:exp_Lorenz_gPC} into the Lorenz system~\eqref{eq:Lorenz}, we can rewrite the resulting set of equations on the Fourier$\times$gPC coefficients in the form
\begin{align*}
F(X) = 0,
\end{align*}
where $X=(\Omega,x,y,z)$ belongs to $\X$, and $F(X) = \left(F^{(x)}(X),F^{(y)}(X),F^{(z)}(X)\right)$ with
\begin{equation}
\label{eq:F_Lorenz}
\left\{\begin{aligned}
F^{(x)}(X) &= -\txti \K \left(\Omega \circledast x\right) - \sigma x +\sigma y\\
F^{(y)}(X) &= -\txti \K \left(\Omega \circledast y\right) + \rho \circledast x  - y - \left(x\circledast z\right)  \\
F^{(z)}(X) &= -\txti \K \left(\Omega \circledast z\right) -\beta z + \left(x\circledast y\right).
\end{aligned}\right.
\end{equation}
In the above equations, we identify an element of $\ell^1_\eta(\N,\C)$ like $\Omega$ or $\rho$ with its natural injection in $\ell^1_\nu\left(\Z,\ell^1_\eta(\N,\C)\right)$, which allows us to write for instance $\Omega \circledast x$, which is nothing but the sequence $\left(\Omega\ast x_k\right)_{k\in\Z}$, where each $\Omega\ast x_k$ is an element of $\ell^1_\eta(\N,\C)$.

This $F$ is almost the one to which we will apply Theorem~\ref{th:NK}, but we first need to add a phase condition to get rid of time-translation invariance and allow $F$ to have isolated zeros. Here we depart slightly from the framework introduced in~\cite{BreKue20}, in which we used a Poincaré phase (or transversality) condition, and instead impose
\begin{align}
\label{eq:G}
G(X) := \sum_{k\in\Z} ik\left(x_k \ast \conj(\tilde x_k)  + y_k \ast \conj(\tilde y_k)  + z_k \ast \conj(\tilde z_k)  \right) = 0,
\end{align}
where $(\tilde x,\tilde y,\tilde z)$ is an approximate solution previously computed. This is inspired from the integral phase condition
\begin{align*}
\int \langle u, \tilde u ' \rangle = 0,
\end{align*}
which has proven to be more robust numerically~\cite{KraOsiGal07}, and turns out to also be more efficient regarding the a posteriori validation.

Given truncation levels $K$ and $N$ in $\N$, and an approximate periodic solution $\bX=(\bO,\bx,\by,\bz)$ in $\Pi_{K,N} \X$, we are going to try and validate this approximation using Theorem~\ref{th:NK} for the map 
\begin{align}
\label{eq:FF_Lorenz}
\F=(G,F)
\end{align}
and the space $\X$. In order to do so, we first need to derive a suitable approximate inverse $A$ of $D\F(\bX)$, and then to obtain bounds satisfying~\eqref{eq:YZ12}. We accomplish these tasks in the next two subsections.

\subsection{The approximate inverse $A$}
\label{sec:A}

If we were considering a deterministic periodic orbit (for a given value of $p$), and therefore working with the space $\X_{deter} = \C \times \left(\ell^1_\nu\left(\Z,\C\right)\right)^3$ rather than $\X = \ell^1_\eta(\N,\C) \times \left(\ell^1_\nu\left(\Z,\ell^1_\eta(\N,\C)\right)\right)^3$, a typical way to construct $A$ would be as follows. One would split the space into a \emph{finite part} and a \emph{tail part}
\begin{align*}
\X_{deter} = \Pi_K \X_{deter} \oplus (I-\Pi_K)\X_{deter},
\end{align*}
where $\Pi_K \X_{deter} = \C \times \left(\Pi_K \ell^1_\nu\left(\Z,\C\right)\right)^3$, and define $A$ separately on both subspaces. For the finite part, we would simply compute numerically an inverse $A_{K}$ of $\Pi_K D\F(\bX)\Pi_K$ which can be represented as a $(6K-2)\times(6K-2)$ matrix with complex entries. For the tail part, i.e. the higher order modes, the parts of $\F$ corresponding to the differential operator would be the most important one, and we would neglect the rest to define $A$:
\begin{align*}
A X_k := -\frac{1}{ik\bO} X_k, \qquad\forall~\vert k\vert \geq K,
\end{align*}
where $X_k = (x_k, y_k, z_k)$. This is but an example of a general strategy for defining approximate inverses in the context of computer-assisted proofs, which consists in choosing $A$ as a finite rank perturbation of some leading order operator that can be inverted by hand, the finite rank part being directly related to a finite dimensional projection of the map $\F$.

In this work, for a random periodic orbit, we adopt this strategy with a slight twist, by trying to mimic as much a possible the situation in the deterministic case, but replacing $\C$ by $\ell^1_\eta(\N,\C)$ and the multiplication on $\C$ by the generalized convolution product $\ast$.

\medskip

We consider the same splitting as above, based only on the Fourier modes
\begin{align*}
\X = \Pi_K \X \oplus (I-\Pi_K)\X,
\end{align*}
where $\Pi_K \X = \ell^1_\eta(\N,\C) \times \left(\Pi_K \ell^1_\nu\left(\Z,\ell^1_\eta(\N,\C)\right)\right)^3$. We will still refer informally to both subspaces as the \emph{finite} part and the \emph{tail} part respectively, but we emphasize that $\Pi_K \X$ is only ``finite'' in terms of Fourier modes, but remains an infinite dimensional subspace because we did not truncate anything in the gPC components. 

\begin{remark}
It is really crucial to take the ``finite'' part, i.e. the part on which the inverse will be computed accurately, as $\Pi_K\X$ and not as $\Pi_{K,N}\X$, otherwise the resulting $A$ will not be a good enough approximate inverse. Indeed, we are allowed to truncate in Fourier because of the regularizing properties of the equation in $t$ (which corresponds to the Fourier expansion), but there is no such regularization in $p$ (which corresponds to the gPC expansion).
\end{remark}

Regarding the tail part, we first compute numerically an approximate inverse $\Upsilon$ of $\bO$ in $\Pi_N\ell^1_\eta(\N,\C)$, i.e. such that $\Upsilon\ast\bO \approx 1$. Then, we define $A$ in the tail in a similar way as above, except $\frac{1}{\bO}X_k$ now becomes $\Upsilon \ast X_k$. 

For the finite part, we will also compute numerically an approximate inverse $A_K$ of $\Pi_K D\F(\bX)\Pi_K$. However, $\Pi_K D\F(\bX)\Pi_K$ is no longer finite dimensional, but can be identified with a linear operator on $\left(\ell^1_\eta(\N,\C)\right)^{6K-2}$. To make things slightly more concrete, this means we can still represent $\Pi_K D\F(\bX)\Pi_K$ as a $(6K-2)\times(6K-2)$ matrix, except each entry is now a linear operator on $\ell^1_\eta(\N,\C)$ rather than a complex number. The key point here is that each of these linear operators is not any linear operator, but a multiplication operator, and can therefore be represented compactly by an element of $\ell^1_\eta(\N,\C)$ (analogously to the way complex numbers in the deterministic case actually represent multiplication operators on $\C$). Therefore, we compute an approximate inverse $A_K$ of $\Pi_K D\F(\bX)\Pi_K$ under the form of a $(6K-2)\times(6K-2)$ matrix of multiplication operators on $\ell^1_\eta(\N,\C)$, represented by $(6K-2)^2$ elements of $\Pi_N\ell^1_\eta(\N,\C)$. Each of these multiplication operator is still of infinite rank, but the fact they are multiplications (generalized convolutions) with elements of $\Pi_N\ell^1_\eta(\N,\C)$ means that $A_K$ can be represented and stored on a computer.

To summarize, we define the linear operator $A$ by
\begin{align}
\label{eq:A_Lorenz}
\left\{
\begin{aligned}
&A \Pi_K(X) = A_K \Pi_K X \\
&A X_k = \frac{1}{-\txti k} \Upsilon \ast X_k,\qquad \vert k\vert \geq K,
\end{aligned}
\right.
\end{align}
where $\Upsilon \ast X_k$ must be understood as $\left(\Upsilon \ast x_k,\Upsilon \ast y_k,\Upsilon \ast z_k\right)$, and $A_K$ is a linear operator on $\left(\ell^1_\eta(\N,\C)\right)^{6K-2}$ which takes the form of $(6K-2)^2$ multiplication operators with elements of $\Pi_N\ell^1_\eta(\N,\C)$ (which are computed numerically so that $A_K \approx \left(\Pi_K D\F(\bX)\Pi_K\right)^{-1}$).

\begin{remark}
\label{rem:Lorenz_memory}
In practice, one of the main limiting factors for computer-assisted proofs like the ones we are using here is the dimension of the finite part of $A$, and the computing power and memory requirement associated to it.
Given the type of expansion were are using, namely bi-infinite series, it is remarkable that the number of complex numbers needed to represent this finite part scales likes $K^2N$ (it is actually equal to $(6K-2)^2N$), rather than like $K^2N^2$. This is possible because we take advantage of the multiplication operator structure.
\end{remark}

\subsection{Bounds for the a posteriori validation}
\label{sec:bounds_Lorenz}

Now that $A$ has been defined in~\eqref{eq:A_Lorenz}, we are left with deriving estimates $Y$, $Z_1$ and $Z_2$ satisfying~\eqref{eq:YZ12}. Once a proper framework has been obtained, including an appropriate definition of $A$ and a suitable choice of sequence space, the derivation of these estimates is by now standard in the computer-assisted proof literature. Therefore, we only go into the details when they are specific to the new structure of $A$ that is used in this work.

We recall that the map $\F$ we are considering is defined in~\eqref{eq:F_Lorenz}-\eqref{eq:FF_Lorenz}, the space $\X$ in~\eqref{eq:X_Lorenz}, and that the approximate solution $\bX$ belongs to $\Pi_{K,N}\X$, i.e. is a trigonometric polynomial of degree at most $N-1$, whose coefficients are polynomials of degree at most $K-1$. Similarly, we assume that $\tilde x$, $\tilde y$ and $\tilde z$ involved in the phase condition~\eqref{eq:G} all belong to $\Pi_{K,N}\ell^1_\nu\left(\Z,\ell^1_\eta(\N,\C)\right)$.

\subsubsection{The bound $Y$}

As was the case in the example of Section~\ref{sec:BasicExample}, obtaining a $Y$ bound satisfying~\eqref{eq:Y} is rather straightforward, as we can simply take
\begin{align*}
Y := \left\Vert A\F(\bX) \right\Vert_\X.
\end{align*}
The only thing to notice is that $A\F(\bX)$ has only finitely non-zero coefficients, hence it can be computed exactly on a computer, up to rounding errors which are taken care of by the use of interval arithmetic. Indeed, having $\bX$ in $\Pi_{K,N}\X$ means $\F(\bX)$ belongs to $\Pi_{2K-1,2N-1}\X$, and that $A\F(\bX)$ belongs to $\Pi_{2K-1,3N-2}\X$, hence the above defined $Y$ is computable in finitely many operations.

\subsubsection{The bound $Z_1$}
\label{sec:Z1_Lorenz}

In order to obtain a $Z_1$ estimate, we need to bound the operator norm of $B:=I-AD\F(\bX)$. To that end, we use the following splitting (see Lemma~\ref{lem:norm_op_split})
\begin{align}
\label{eq:normB_Lorenz}
\left\Vert B \right\Vert_\X = \max\left(\sup_{\substack{X \in \Pi_{2K-1} \X \\ X\neq 0}} \frac{\left\Vert BX \right\Vert_\X}{\left\Vert X \right\Vert_\X},\ \sup_{\substack{X \in \left(I-\Pi_{2K-1}\right) \X \\ X\neq 0}} \frac{\left\Vert BX \right\Vert_\X}{\left\Vert X \right\Vert_\X}\right).
\end{align}
In order to handle the first part, we will use the following lemma, which is a direct consequence of the Banach algebra property of Lemma~\ref{lem:gPC_convo} and of the usual computation of $\ell^1$ operator norms.
\begin{lemma}
\label{lem:norm_op}
Let $B$ be a linear operator on $\ell^1_\nu\left(\Z,\ell^1_\eta(\N,\C)\right)$, represented as an \emph{infinite matrix} $\left(B_{k,l}\right)_{k,l\in\Z}$ of linear operators on $\ell^1_\eta(\N,\C)$, and assume that each of those is in fact a multiplication operator by an element $b_{k,l}$ in $\ell^1_\eta(\N,\C)$. Then, denoting by $\left\Vert b \right\Vert_{\ell^1_\eta}$ the infinite matrix of real numbers $\left(\left\Vert b _{k,l}\right\Vert_{\ell^1_\eta} \right)_{k,l\in\Z}$, we have that
\begin{align*}
\left\Vert B \right\Vert_{\ell^1_\nu(\Z,\ell^1_\eta)} &= \left\Vert \left\Vert b \right\Vert_{\ell^1_\eta} \right\Vert_{\ell^1_\nu} = \sup_{l\in\Z} \frac{1}{\nu^{\vert l\vert}} \sum_{k\in\Z} \left\Vert b_{k,l}\right\Vert_{\ell^1_\eta} \nu^{\vert k\vert}.
\end{align*}
\end{lemma}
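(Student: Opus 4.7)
The plan is to identify $\ell^1_\nu(\Z,\ell^1_\eta(\N,\C))$ as a weighted $\ell^1$ space over the doubly-indexed Schauder basis $(e_{l,n})_{(l,n)\in\Z\times\N}$, where $e_{l,n}$ is the sequence whose only nonzero entry, at position $l$, is the $n$-th standard basis vector of $\ell^1_\eta(\N,\C)$. Under this identification, the norm on $\ell^1_\nu(\Z,\ell^1_\eta)$ takes exactly the weighted $\ell^1$ form required by Lemma~\ref{lem:norm_op_split}, with weights $w_{l,n}=\nu^{\vert l\vert}\eta^n$. Invoking that lemma directly reduces the question to
\begin{equation*}
\left\Vert B\right\Vert_{\ell^1_\nu(\Z,\ell^1_\eta)} = \sup_{(l,n)\in\Z\times\N}\frac{1}{\nu^{\vert l\vert}\eta^n}\left\Vert Be_{l,n}\right\Vert_{\ell^1_\nu(\Z,\ell^1_\eta)}.
\end{equation*}

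Next, I would unfold the norm on the right-hand side. Since $(Be_{l,n})_k = B_{k,l}e_n = b_{k,l}\ast e_n$, one gets $\left\Vert Be_{l,n}\right\Vert_{\ell^1_\nu(\Z,\ell^1_\eta)} = \sum_{k\in\Z}\left\Vert b_{k,l}\ast e_n\right\Vert_{\ell^1_\eta}\,\nu^{\vert k\vert}$, so the whole computation is reduced to the scalar pointwise identity
\begin{equation*}
\sup_{n\in\N}\frac{\left\Vert b_{k,l}\ast e_n\right\Vert_{\ell^1_\eta}}{\eta^n} = \left\Vert b_{k,l}\right\Vert_{\ell^1_\eta},
\end{equation*}
for each fixed $(k,l)$. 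The upper bound follows from the Banach algebra property in Lemma~\ref{lem:gPC_convo}: $\left\Vert b_{k,l}\ast e_n\right\Vert_{\ell^1_\eta} \leq \left\Vert b_{k,l}\right\Vert_{\ell^1_\eta}\left\Vert e_n\right\Vert_{\ell^1_\eta}=\eta^n\left\Vert b_{k,l}\right\Vert_{\ell^1_\eta}$. For the matching lower bound, I would specialize to $n=0$: the normalization $\phi_0(1)=1$ combined with $\deg\phi_0=0$ forces $\phi_0\equiv 1$, so $e_0$ is the multiplicative identity for $\ast$, whence $b_{k,l}\ast e_0=b_{k,l}$ and the ratio attains the value $\left\Vert b_{k,l}\right\Vert_{\ell^1_\eta}$.

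Because all terms are nonnegative and because the upper bound in the previous step is saturated at $n=0$ uniformly in $k$, the supremum over $n$ can be taken inside the sum over $k$, and then the remaining supremum over $l$ yields the announced formula. No real obstacle is expected: this is essentially the usual operator norm computation on a weighted $\ell^1$ space, lifted to a vector-valued setting. The only subtle ingredient is precisely the fact that the submultiplicative inequality $\left\Vert b\ast x\right\Vert_{\ell^1_\eta}\leq \left\Vert b\right\Vert_{\ell^1_\eta}\left\Vert x\right\Vert_{\ell^1_\eta}$ is actually attained (on $x=e_0$), which is what promotes the trivial upper bound on the multiplication operator norm into the exact value $\left\Vert b\right\Vert_{\ell^1_\eta}$ needed for the claimed equality.
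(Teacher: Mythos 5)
Your proof is correct and takes essentially the route the paper intends: the paper gives no detailed argument, stating only that the lemma is ``a direct consequence of the Banach algebra property of Lemma~\ref{lem:gPC_convo} and of the usual computation of $\ell^1$ operator norms,'' and your write-up is exactly that computation made rigorous, via Lemma~\ref{lem:norm_op_split} applied to the basis $e_{l,n}$ with weights $\nu^{\vert l\vert}\eta^n$, plus the fact that the submultiplicative bound for the multiplication operator by $b_{k,l}$ is attained on the unit $e_0$ (since $\phi_0\equiv 1$). Your explicit justification of the interchange of $\sup_n$ with $\sum_k$ (saturation at $n=0$ uniformly in $k$) is precisely the detail the paper leaves implicit, and you handle it correctly.
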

This lemma easily generalizes to a linear operator defined on $\X$ (or on a subspace of $\X$), and allows us to get a computable upper-bound $Z_1^{finite}$ of the first supremum in~\eqref{eq:normB_Lorenz}. Indeed, for $X$ in $\Pi_{2K-1} \X$ and $B=I-AD\F(\bX)$, $BX$ belongs to $\Pi_{3K-2} \X$, which means we only have finitely many multiplications operators on $\ell^1_\eta$ whose norm we need to compute in order to control this supremum over $\Pi_{2K-1} \X$. Finally, since we assumed that all the multiplication operators in $A$ were with elements of $\Pi_N\ell^1_\eta(\N,\C)$, each of those multiplication operators in $B$ are with elements of $\Pi_{2N-1}\ell^1_\eta(\N,\C)$, which have only finitely many non-zero coefficients, which makes their norm fully computable.

Regarding the second part of the splitting, if $X $ belongs to $\left(I-\Pi_{2K-1}\right) \X$, then $D\F(\bX)X$ is in $\left(I-\Pi_{K}\right) \X$, and therefore so is $AD\F(\bX)X$. Hence, still assuming $X$ belongs to $\left(I-\Pi_{2K-1}\right) \X$, $BX$ is in $\left(I-\Pi_{K}\right) \X$, we can write $BX$ rather explicitly since it does not involve $A_K$: for $\vert k\vert \geq K$ we get
\begin{align*}
\left(BX\right)_k &= X_k - A DF_k(\bX)X \\
&= \begin{pmatrix}
x_k \\ y_k \\ z_k
\end{pmatrix}
+ \frac{1}{\txti k}\Upsilon \ast 
\begin{pmatrix}
-\txti k \bO \ast x_k - \sigma x_k +\sigma y_k\\
-\txti k \bO \ast y_k + \rho \ast x_k  - y_k - \left(\bx\circledast z + x\circledast\bz\right)_k  \\
-\txti k \bO \ast z_k - \beta z_k + \left(\bx\circledast y + x\circledast\by\right)_k
\end{pmatrix} \\
&= \left(1-\Upsilon\ast\bO\right) \ast \begin{pmatrix}
x_k \\ y_k \\ z_k
\end{pmatrix}
+ \frac{1}{\txti k} \Upsilon \ast
\begin{pmatrix}
 -\sigma x_k +\sigma y_k\\
 \rho \ast x_k  - y_k - \left(\bx\circledast z + x\circledast\bz\right)_k  \\
-\beta z_k + \left(\bx\circledast y + x\circledast\by\right)_k
\end{pmatrix},
\end{align*}
which yields
\begin{align*}
\left\Vert BX \right\Vert_\X &\leq \left\Vert 1-\Upsilon\ast\bO\right\Vert_{\ell^1_\eta} \left\Vert X\right\Vert_\X \\
&\quad + \frac{1}{K}\bigg(\left(\sigma\left\Vert\Upsilon\right\Vert_{\ell^1_\eta} + \left\Vert \left(\bz-\rho\right)\circledast\Upsilon \right\Vert_{\ell^1_\nu(\ell^1_\eta)} + \left\Vert \by\circledast\Upsilon\right\Vert_{\ell^1_\nu(\ell^1_\eta)} \right) \left\Vert x \right\Vert_{\ell^1_\nu(\ell^1_\eta)}  \\
&\qquad\qquad  + \left(\left(\sigma + 1\right)\left\Vert\Upsilon\right\Vert_{\ell^1_\eta} + \left\Vert \bx\circledast\Upsilon \right\Vert_{\ell^1_\nu(\ell^1_\eta)} \right) \left\Vert y \right\Vert_{\ell^1_\nu(\ell^1_\eta)} + \left(\left\Vert \bx\circledast\Upsilon \right\Vert_{\ell^1_\nu(\ell^1_\eta)}  + \beta\left\Vert\Upsilon\right\Vert_{\ell^1_\eta} \right) \left\Vert z \right\Vert_{\ell^1_\nu(\ell^1_\eta)} \bigg) \\
&\leq \left(\left\Vert 1-\Upsilon\ast\bO\right\Vert_{\ell^1_\eta} + \frac{1}{K}  \max 
\begin{pmatrix} \sigma\left\Vert\Upsilon\right\Vert_{\ell^1_\eta} + \left\Vert \left(\bz-\rho\right)\circledast\Upsilon \right\Vert_{\ell^1_\nu(\ell^1_\eta)} + \left\Vert \by\circledast\Upsilon\right\Vert_{\ell^1_\nu(\ell^1_\eta)} \\
\left(\sigma + 1\right)\left\Vert\Upsilon\right\Vert_{\ell^1_\eta} + \left\Vert \bx\circledast\Upsilon \right\Vert_{\ell^1_\nu(\ell^1_\eta)} \\
\left\Vert \bx\circledast\Upsilon \right\Vert_{\ell^1_\nu(\ell^1_\eta)} + \beta\left\Vert\Upsilon\right\Vert_{\ell^1_\eta} 
\end{pmatrix}\right)\left\Vert X\right\Vert_\X .
\end{align*}

Therefore, we can define
\begin{align*}
Z_1^{tail} &= \left\Vert 1-\Upsilon\ast\bO\right\Vert_{\ell^1_\eta} + \frac{1}{K}  \max 
\begin{pmatrix} \sigma\left\Vert\Upsilon\right\Vert_{\ell^1_\eta} + \left\Vert \left(\bz-\rho\right)\circledast\Upsilon \right\Vert_{\ell^1_\nu(\ell^1_\eta)} + \left\Vert \by\circledast\Upsilon\right\Vert_{\ell^1_\nu(\ell^1_\eta)} \\
\left(\sigma + 1\right)\left\Vert\Upsilon\right\Vert_{\ell^1_\eta} + \left\Vert \bx\circledast\Upsilon \right\Vert_{\ell^1_\nu(\ell^1_\eta)} \\
\left\Vert \bx\circledast\Upsilon \right\Vert_{\ell^1_\nu(\ell^1_\eta)} + \beta\left\Vert\Upsilon\right\Vert_{\ell^1_\eta} 
\end{pmatrix},
\end{align*}
and finally $Z_1:=\max\left(Z_1^{finite},Z_1^{tail}\right)$ which satisfies~\eqref{eq:Z1}.

\subsubsection{$Z_2$}

As in the example of Section~\ref{sec:BasicExample}, our map $\F$ is quadratic, which allows us to take $r^*=+\infty$. In particular, for any $X$ and $X'$ in $\X$ we have
\begin{align*}
D^2G(\bX)(X,X') & = 0 \\
D^2F(\bX)(X,X') & = i\K \left(\Omega \circledast \begin{pmatrix} x' \\ y' \\ z' \end{pmatrix} + \Omega' \circledast \begin{pmatrix} x \\ y \\ z \end{pmatrix}\right) + \begin{pmatrix}
0 \\ -x\circledast z' - x'\circledast z \\ x\circledast y' + x'\ast y
\end{pmatrix},
\end{align*}
where $\K$ was introduced in Section~\ref{sec:Setting_Lorenz}.
Therefore
\begin{align*}
\left\Vert AD^2\F(\bX)(X,X') \right\Vert_\X &\leq \left\Vert A\K \right\Vert_\X \left(\vert\Omega\vert \left\Vert X' \right\Vert_\X + \vert\Omega'\vert \left\Vert X \right\Vert_\X \right) \\
&\quad  + \left\Vert A \right\Vert_\X \left( \left\Vert x \right\Vert_{\ell^1_\nu(\ell^1_\eta)}  \left(\left\Vert y' \right\Vert_{\ell^1_\nu(\ell^1_\eta)} + \left\Vert z' \right\Vert_{\ell^1_\nu(\ell^1_\eta)}\right) + \left\Vert x' \right\Vert_{\ell^1_\nu(\ell^1_\eta)}  \left(\left\Vert y \right\Vert_{\ell^1_\nu(\ell^1_\eta)} + \left\Vert z \right\Vert_{\ell^1_\nu(\ell^1_\eta)}\right) \right) \\
&\leq \max\left(\left\Vert A\K \right\Vert_\X,\, \left\Vert A \right\Vert_\X\right) \left\Vert X \right\Vert_\X \left\Vert X' \right\Vert_\X,
\end{align*}
and $Z_2 := \max\left(\left\Vert A\K \right\Vert_\X,\, \left\Vert A \right\Vert_\X\right)$ satisfies~\eqref{eq:Z2} (with $r^*=+\infty$).

\begin{remark}
To be precise, in the above definition of $Z_2$ we replace the norm of $A$ and $A\K$ by easily computable upper bounds of their norms, obtained using Lemma~\ref{lem:norm_op}. 

This estimate could also be made slightly sharper, by noticing that some ``columns'' of $A$ and $\K A$ are always multiplied by zero in the above computation of $AD^2\F(\bX)(X,X')$, and can therefore be excluded from the norm computation.
\end{remark}

\subsection{Results}

We are now ready to rigorously validate approximate periodic solutions of~\eqref{eq:Lorenz}-\eqref{eq:Lorenz_rho} represented as truncated Fourier$\times$gPC series, by proving the existence of a true solution within a distance at most $r$ of the approximate one, for an explicit value of $r$.

Approximate solutions using truncated Fourier$\times$gPC series were already obtained in~\cite[Section 6]{BreKue20}, but without guarantee regarding their accuracy, which is what we add in this paper, thanks to Theorem~\ref{th:NK} and the estimates derived up to now in this section. Here is an example of the kind of results we can obtain with this approach.

\begin{theorem}
\label{th:Lorenz}
Take the parameter values $\brho=28$, $\delta=10$ in~\eqref{eq:Lorenz_rho}, the generalized convolution product (Definition~\ref{def:convo_gene}) associated to the Legendre polynomials $P_n$, the weights $\nu=\eta=1$ in the definition of $\X$~\eqref{eq:X_Lorenz}, and the truncation parameters $K=100$ and $N=15$. Consider the approximate Fourier$\times$Legendre solution $\bX$ in $\Pi_{K,N}\X$ of~\eqref{eq:Lorenz}-\eqref{eq:Lorenz_rho}, which can be downloaded at~\cite{BreGit}, and for which a couple of orbits are represented on Figure~\ref{fig:Lorenz}.

There exists a periodic solution $X^*$ in $\X$ of~\eqref{eq:Lorenz}-\eqref{eq:Lorenz_rho}, such that $\left\Vert X^*-\bX\right\Vert_\X \leq r_{min} = 1.3724\times 10^{-4}$. This is the unique solution in the open ball of center $\bX$ and radius $r_{max} = 1.382\times 10^{-3}$ in $\X$.
\end{theorem}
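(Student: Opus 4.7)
The plan is to invoke Theorem~\ref{th:NK} applied to the map $\F$ defined in~\eqref{eq:FF_Lorenz} with the Banach space $\X$ from~\eqref{eq:X_Lorenz}, using $\bX$ as the approximate zero. All the ingredients---the space, the map, the phase condition $G$ from~\eqref{eq:G}, and the construction of the approximate inverse $A$ in~\eqref{eq:A_Lorenz}---have already been laid out in Sections~\ref{sec:Setting_Lorenz} and~\ref{sec:A}, and the three bounds $Y$, $Z_1$, $Z_2$ are derived in closed (or semi-closed) form in Section~\ref{sec:bounds_Lorenz}. What remains for this specific statement is to (i) actually construct $A$ for the given $\bX$, (ii) numerically evaluate the formulas for $Y$, $Z_1$, $Z_2$ in interval arithmetic so that rounding errors are rigorously accounted for, and (iii) verify the inequalities~\eqref{eq:cond1}--\eqref{eq:cond2}, from which $r_{min}$ and $r_{max}$ are read off via~\eqref{eq:r}.

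The first computational step is to produce $\Upsilon \in \Pi_N\ell^1_\eta(\N,\C)$ as an approximate reciprocal of $\bO$ with respect to $\ast$ (by Newton's method on the truncation), and to produce the finite block $A_K$ as an approximate inverse of $\Pi_K D\F(\bX)\Pi_K$, stored as a $(6K-2)\times(6K-2)$ matrix whose entries are multiplication operators on $\ell^1_\eta(\N,\C)$, each represented by an element of $\Pi_N\ell^1_\eta(\N,\C)$, exactly as described around~\eqref{eq:A_Lorenz}. This yields the operator $A$.

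The second step is the evaluation of the bounds. The bound $Y$ is essentially immediate: since $\bX \in \Pi_{K,N}\X$, one has $A\F(\bX) \in \Pi_{2K-1,3N-2}\X$, so $\Vert A\F(\bX)\Vert_\X$ is a finite sum of interval computations. For $Z_1$ we use the splitting~\eqref{eq:normB_Lorenz} of Section~\ref{sec:Z1_Lorenz}: on $\Pi_{2K-1}\X$ the operator $I - AD\F(\bX)$ is a finite matrix of multiplication operators, whose $\X$-operator norm is computed via Lemma~\ref{lem:norm_op}; on $(I-\Pi_{2K-1})\X$ the explicit bound $Z_1^{tail}$ derived in Section~\ref{sec:Z1_Lorenz} applies, depending only on $\Vert 1 - \Upsilon\ast\bO\Vert_{\ell^1_\eta}$ and on convolutions of $\bx,\by,\bz-\rho$ against $\Upsilon$. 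For $Z_2$, $\F$ is quadratic so $r^* = +\infty$, and it suffices to bound $\max(\Vert A\K\Vert_\X,\Vert A\Vert_\X)$ by using Lemma~\ref{lem:norm_op} separately on the finite and tail blocks of $A$ and $A\K$.

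The main obstacle is computational rather than conceptual: with $K=100$ and $N=15$, the matrix $A_K$ stores on the order of $(6K-2)^2 N \approx 5.3\times 10^6$ complex coefficients (cf.\ Remark~\ref{rem:Lorenz_memory}), and every norm appearing in $Z_1^{finite}$ and $Z_2$ must be evaluated on objects of that size in interval arithmetic so that all floating-point rounding is enclosed. Once the numerical values of $Y$, $Z_1$, $Z_2$ have been produced, one verifies~\eqref{eq:cond1} and~\eqref{eq:cond2}; the required injectivity of $A$ follows automatically from $Z_1 < 1$, as in the proof of Proposition~\ref{prop:ex} (the invertibility of the finite block is forced by $Z_1 < 1$, and $\Upsilon$ is invertible in the commutative ring $\ell^1_\eta(\N,\C)$ for the same reason). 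Theorem~\ref{th:NK} together with~\eqref{eq:r} then produces the announced values $r_{min} = 1.3724\times 10^{-4}$ and $r_{max} = 1.382\times 10^{-3}$, and the unique zero $X^* \in \B_\X(\bX,r_{max})$ reconstructs through~\eqref{eq:exp_Lorenz_Four}--\eqref{eq:exp_Lorenz_gPC} into a genuine periodic solution of~\eqref{eq:Lorenz}--\eqref{eq:Lorenz_rho}.
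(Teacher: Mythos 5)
Your proposal follows essentially the same route as the paper's proof: apply Theorem~\ref{th:NK} to $\F$ from~\eqref{eq:FF_Lorenz} with $A$ from~\eqref{eq:A_Lorenz}, evaluate the bounds of Section~\ref{sec:bounds_Lorenz} in interval arithmetic (the paper reports $Y \approx 3.62\times 10^{-5}$, $Z_1 \approx 0.722$, $Z_2 \approx 201$), check~\eqref{eq:cond}, and read off $r_{min}$, $r_{max}$ from~\eqref{eq:r}. Your injectivity argument also matches the paper's in substance, though the paper is slightly more explicit on the one point you compress: $Z_1<1$ only gives surjectivity of $A$ a priori, the tail part is bijective because $\Vert 1-\Upsilon\ast\bO\Vert_{\ell^1_\eta}<1$ makes $\Upsilon$ invertible, and injectivity of the finite block $A_K$ then follows because a surjective square matrix over the commutative ring $\ell^1_\eta$ is automatically injective.
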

\begin{proof}
We consider $\F$ as in~\eqref{eq:F_Lorenz}-\eqref{eq:FF_Lorenz}, $A$ as in~\eqref{eq:A_Lorenz}, and evaluate the bounds $Y$, $Z_1$ and $Z_2$ (with $r^*=+\infty$) obtained in Section~\ref{sec:bounds_Lorenz}. We get
\begin{align*}
Y = 3.62368...\times 10^{-5} \quad Z_1 = 0.72214... \quad Z_2 = 201.067...,
\end{align*}
hence assumptions~\eqref{eq:cond} are satisfied. From~\eqref{eq:Z1} and \eqref{eq:cond1} we know that $A$ must be surjective, and that the tail part of $A$ is bijective ($Z_1^{tail}<1$ yields $\left\Vert 1-\Upsilon\ast\bO\right\Vert_{\ell^1_\eta}<1$, therefore $\Upsilon$ is invertible). The finite part $A_K$ of $A$ is only surjective a priori, but since $A_K$ can be represented as a (finite) matrix over the commutative ring $\ell^1_\eta$, surjectivity implies injectivity and we do have that $A$ is injective. Theorem~\ref{th:NK} then yields the announced results, with 
\begin{align*}
r_{min} = \frac{1-Z_1 - \sqrt{(1-Z_1)^2-2YZ_2}}{Z_2}\quad\text{and}\quad  r_{max} = \frac{1-Z_1}{Z_2}.
\end{align*}
The computational parts of the proof, namely the computation of the finite part $A_K$ of $A$ and the evaluation of the bounds, can be reproduced using \texttt{script\_Lorenz.m} available at~\cite{BreGit} (with Intlab~\cite{Rum99} for the required interval arithmetic computations). 
\end{proof}
\begin{figure}
\includegraphics[width=0.6\linewidth]{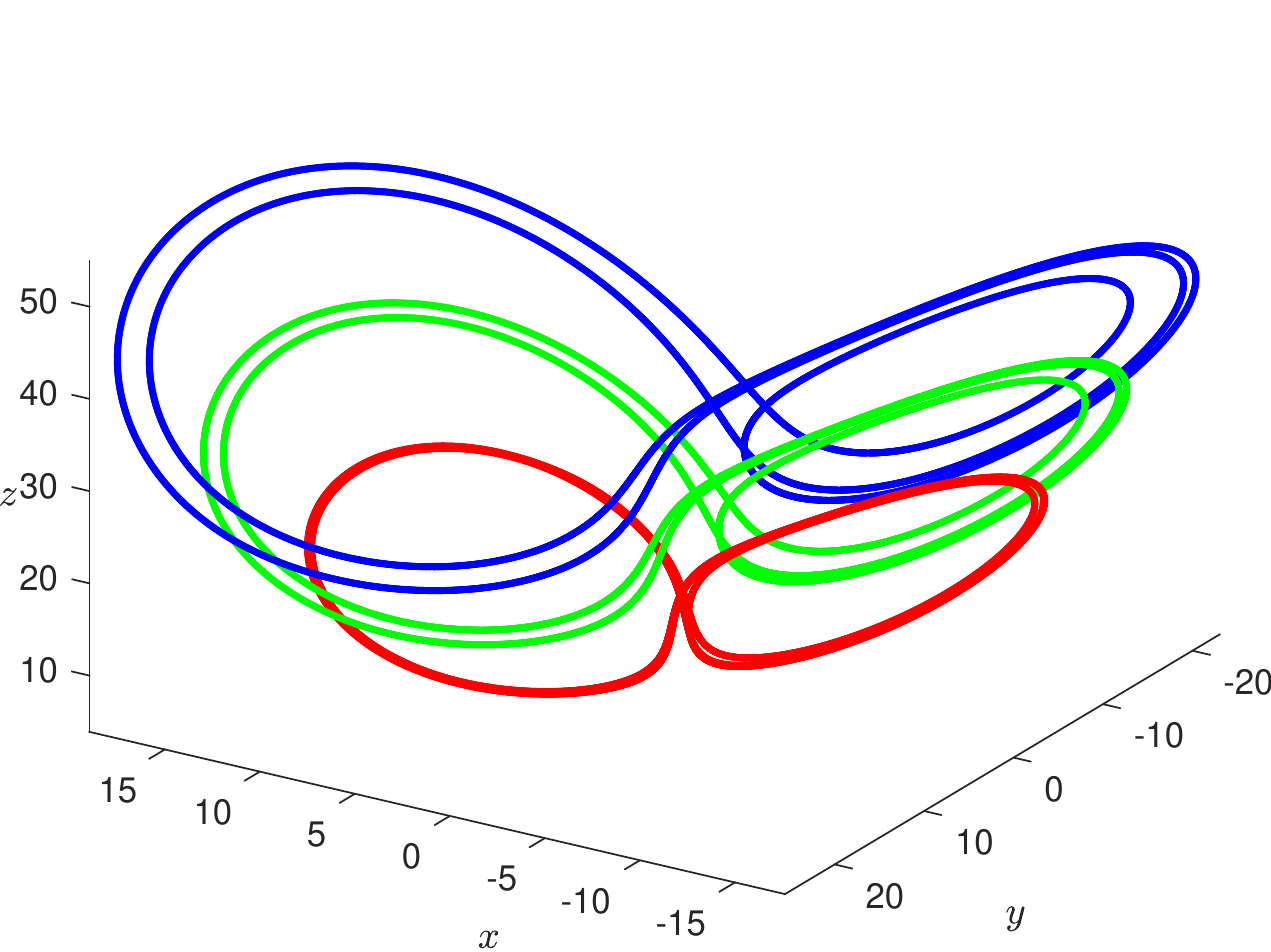}
\caption{Several approximate periodic orbits of the Lorenz system~\eqref{eq:Lorenz}-\eqref{eq:Lorenz_rho}, for $\rho = 18$ ($p=-1$) in red, $\rho = 28$ ($p=0$) in green, and $\rho = 38$ ($p=1$) in blue, all encoded in the Fourier$\times$gPC expansion $\bX$, and validated in Theorem~\ref{th:Lorenz}.}
\label{fig:Lorenz}
\end{figure}
\begin{remark}
Since we used a Legendre expansion in $p$, the solution $X^*$ described in Theorem~\ref{th:Lorenz} would be a natural gPC representation of a random periodic orbit of~\eqref{eq:Lorenz}-\eqref{eq:Lorenz_rho} where $p$ is a uniform random variable in $[-1,1]$. We would then directly get statistics about the random periodic orbit, for instance an approximation of the expectation of its frequency
\begin{align*}
\E \left(\Omega^*\right) \approx \bO_0 = 1.5993...,
\end{align*}
together with a guaranteed error bound
\begin{align*}
\left\vert \E \left(\Omega^*\right) - \bO_0 \right\vert \leq r_{min}.
\end{align*}

Moreover, the obtained solution contains a precise description of a periodic orbit for each $p$ in $[-1,1]$, therefore it could also be used to compute statistics of a random periodic orbit assuming a different distribution for $p$. For instance, if $p$ has a density $\varrho_p$, then we get an approximation of the expectation of its frequency
\begin{align*}
\E \left(\Omega^*\right) \approx \E \left(\bO\right) = \sum_{n=0}^N \bO_n \int_{-1}^1 P_n(s) \varrho_p(s) ds,
\end{align*} 
and an error bound
\begin{align*}
\left\vert \E \left(\Omega^*\right) - \E \left(\bO\right) \right\vert &= \left\vert \sum_{n=0}^\infty \Omega^*_n \int_{-1}^1 P_n(s) \varrho_p(s) ds - \sum_{n=0}^N \bO_n \int_{-1}^1 P_n(s) \varrho_p(s) ds\right\vert \\
&\leq \sum_{n=0}^\infty \left\vert \Omega^*_n - \bO_n\right\vert \int_{-1}^1 \varrho_p(s) ds \\
& = \left\Vert \Omega^* - \bO\right\Vert_{\ell^1_\eta} \\
&\leq r_{min},
\end{align*}
since $\bO_n = 0$ for $n\geq N$ and each $\vert P_n\vert$ is bounded by $1$ on $[-1,1]$.

If $p$ does not have a uniform distribution, the approximate solution obtained using Legendre polynomials will be less accurate (at least in $L^2$ norm) than the one obtained with the gPC basis associated to $\varrho_p$, and one then has to do extra computations a posteriori, like the integrals $\int_{-1}^1 P_n(s) \varrho_p(s) ds$. Nonetheless, since the cost of the validation can change significantly from one choice of basis to the other (see the discussion below), the natural choice of gPC basis (i.e. the one associated to $\varrho_p$) might not always be the cheapest option.
\end{remark}

In the above discussion, we mostly adopted the viewpoint of random periodic orbits, but Theorem~\ref{th:Lorenz} also provides us with a deterministic continuation result, namely the existence (and precise description) of a branch of periodic orbits for $\rho$ going from $\brho-\delta = 18$ to $\brho+\delta = 38$. If there is no underlying random distribution for $p$, we are completely free from the gPC paradigm, and should try to chose the \emph{best} expansion basis, where of course one has to specify in which sense we mean best. In the following we investigate two criteria: 
\begin{itemize}
\item For each basis, what is the smallest value of $N$ for which the validation is successful?
\item For a fixed $N$, what is the minimal validation radius $r_{min}$ obtained which each basis? 
\end{itemize}
The first criterion is related to the cost of the validation, both in terms of computational time and memory requirement (see Remark~\ref{rem:Lorenz_memory}). The second one assesses the accuracy of the obtained approximation, or at least the accuracy that can be guaranteed.

The output of these comparisons is described in Table~\ref{tab:comp_Lorenz_N_var} regarding the cost of the validation, and in Table~\ref{tab:comp_Lorenz_err} regarding the accuracy. In both cases we considered~\eqref{eq:Lorenz}-\eqref{eq:Lorenz_rho} with $\brho=28$ and $\delta=10$, a fixed truncation level $K=100$ for the Fourier modes, and weights $\nu=\eta=1$ in the norm on $\X$. These experiments can be reproduced using \texttt{script\_Lorenz.m} available at~\cite{BreGit} (with Intlab~\cite{Rum99} for the required interval arithmetic computations).

\begin{table}[!h]
\centering
\begin{tabular}{c||c|c|c|c|c}
Polynomial basis & Legendre & Chebyshev  & Chebyshev 2nd kind & Gegenbauer $\mu = 20$ & Taylor \\ 
\midrule[2pt]
Minimal value of $N$ & $14$ & $13$ & $15$ & $22$ & $28$ 
\end{tabular}
\caption{We validate an approximate solution of~\eqref{eq:Lorenz}-\eqref{eq:Lorenz_rho} with $\brho=28$ and $\delta=10$, for several choices of polynomial bases for the expansion in $p$. As soon as $N$ is taken strictly smaller than the value indicated here, the validation fails, typically because~\eqref{eq:cond2} is no longer satisfied.}
\label{tab:comp_Lorenz_N_var}
\end{table}

\begin{table}[!h]
\centering
\begin{tabular}{c||c|c|c|c|c}
Polynomial basis & Legendre & Chebyshev  & Chebyshev 2nd kind & Gegenbauer $\mu = 20$ & Taylor \\ 
\midrule[2pt]
Error bound & $8.7\times 10^{-7}$ & $7.6\times 10^{-7}$ & $9.7\times 10^{-7}$ & $1.5\times 10^{-5}$  & $7.1\times 10^{-4}$ \\
\end{tabular}
\caption{We validate an approximate solution of~\eqref{eq:Lorenz}-\eqref{eq:Lorenz_rho} with $\brho=28$ and $\delta=10$, for several choices of polynomial bases for the expansion in $p$, but this time with a fixed truncation level $N=28$. In each case the validation is successful (meaning that the assumptions of Theorem~\ref{th:NK} can be checked using the estimates of Section~\ref{sec:bounds_Lorenz}), and the displayed error bound corresponds to $r_{min} = \frac{1-Z_1 - \sqrt{(1-Z_1)^2-2YZ_2}}{Z_2}$.}
\label{tab:comp_Lorenz_err}
\end{table}

The Chebyshev polynomials (of the first kind) prove to be the best choice in both metrics (cost and accuracy), which is not surprising given their remarkable approximation properties~\cite{Tre13}, although the difference with the Legendre polynomials or the Chebyshev polynomials of the second kind is barely noticeable. On the other hand, there seems to be a significant difference between using a Chebyshev expansion, and a Gegenbauer expansion (with $\mu=20$) or a Taylor expansion, in particular since the latter require significantly more modes for the validation to be successful (Table~\ref{tab:comp_Lorenz_N_var}), which is related to the fact that they yield less accurate approximations (Table~\ref{tab:comp_Lorenz_err}), at least in the $\X$ norm which is used for the proof. While Taylor expansions were already used successfully to obtain impressive results about validated branches of stationary and perdiodic solutions of PDEs~\cite{Ari22,AriGazKoc21}, the present comparison suggests that replacing the Taylor expansion by a Chebyshev expansion in the continuation variable would prove even more efficient.

\section{Validated continuation of steady states of the Swift-Hohenberg equation}
\label{sec:SH}

We now concentrate fully on the parameter continuation viewpoint, and consider as an example the Swift-Hohenberg equation~\cite{SwiHoh77}
\begin{align}
\label{eq:SH}
\partial_t u = -(1+\Delta)^2 u + \rho u - \beta u^3,
\end{align}
where $u=u(t,x)$ is a scalar function, for which we compute and validate branches of equilibria.
We focus on the 1 dimensional case, where the spatial variable $x$ belongs in $(0,L)$ together with homogeneous Neumann boundary conditions, which is already very rich, as is illustrated by the bifurcation diagram of steady states represented in Figure~\ref{fig:bif_SH}. For the moment we keep $\beta$ fixed, and take $\rho$ as the continuation parameter, which we again normalize by writing
\begin{align}
\label{eq:norm_rho}
\rho = \brho + \delta p,
\end{align}
where $\brho$ and $\delta\geq 0$ are given constants, and $p$ varies in $[-1,1]$.

\begin{figure}[h!]
\includegraphics[width=0.495\linewidth]{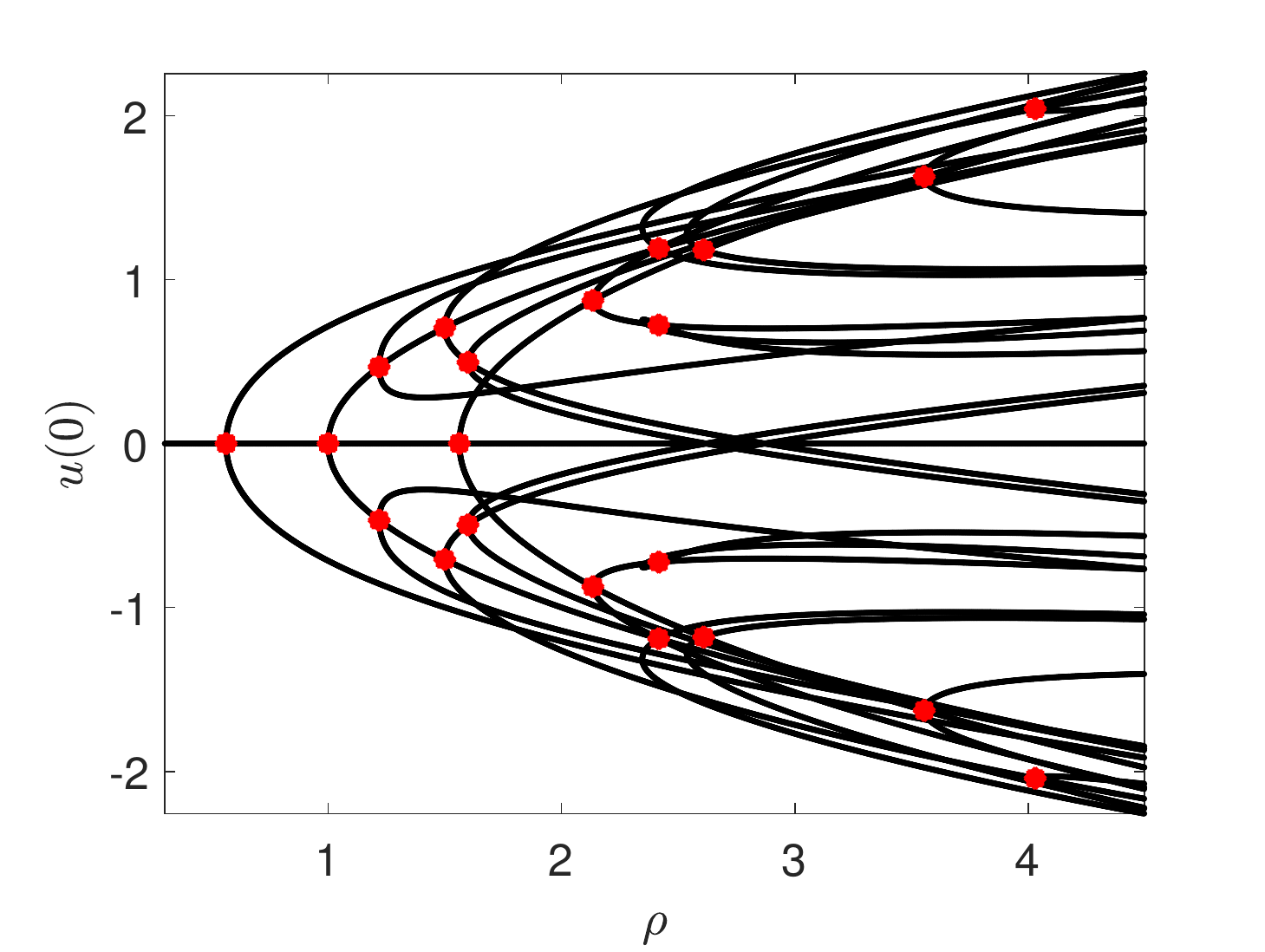} \hfill
\includegraphics[width=0.495\linewidth]{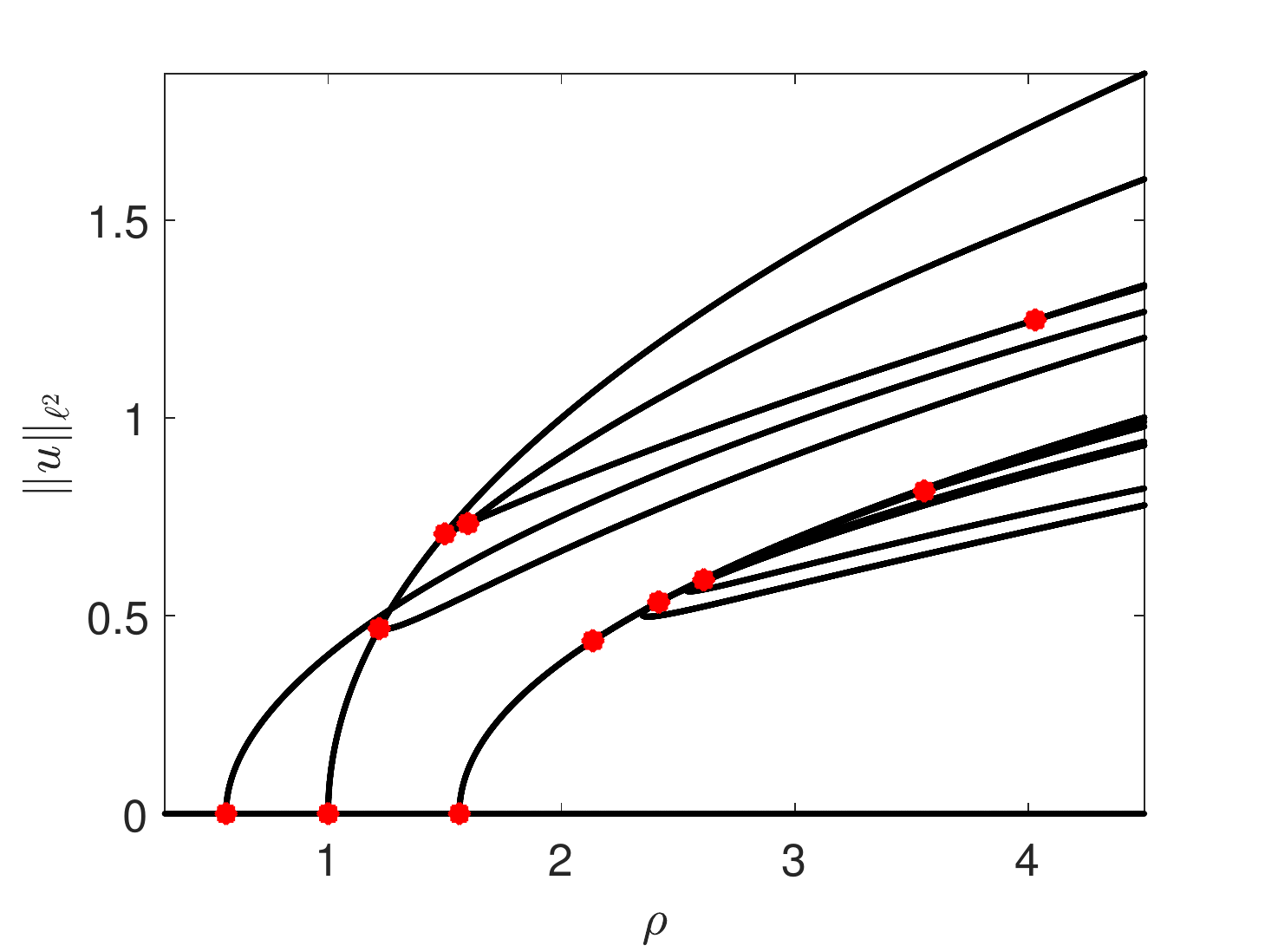}
\caption{A numerical bifurcation diagram of steady states of~\eqref{eq:SH}, for $\beta=1$ and $L=2\pi$, represented using two different projections: $u(0)$ on the left, and $\left\Vert u\right\Vert_{\ell^2}$ on the right. The red dots indicate numerically detected bifurcation.}
\label{fig:bif_SH}
\end{figure}

This problem has already been used as a test case for rigorous continuation methods. We re-emphasize that the main novelty of our work in that regard is the fact that we also expand the solution in the continuation parameter, with several choices of bases, including Chebyshev polynomials, which allows us to represent and validate ``in one go" large portions of the curves of solutions.

\subsection{Setup for the validation}
\label{sec:SH_setup}

The validation setup is very similar to the one used in Section~\ref{sec:Lorenz}, so we go over it more briefly. 

The homogeneous Neumann boundary conditions make it natural to expand the solution in Fourier series in the $x$ variable:
\begin{align}
\label{eq:SH_Four}
u(x,p) = u_0(p) + 2\sum_{k=1}^\infty u_k(p)\cos\left(\frac{k\pi}{L}x\right) = \sum_{k\in\Z} u_k(p) e^{i\frac{k\pi}{L}x},
\end{align}
with $u_{-k} = u_k$, and we use a gPC expansion for the $p$ variable
\begin{align}
\label{eq:up}
u_k(p) = \sum_{n\in\N} u_{k,n} \phi_n(p).
\end{align}

We look for solutions $u=\left(u_{k,n}\right)_{\substack{k\in\Z \\ n\in\N}}$ in the space $\X = \ell^1_\nu\left(\Z,\ell^1_\eta(\N,\R)\right)$ where we impose that $u_{-k,n} = u_{k,n}$ for all $k$ and $n$, and consider the norm
\begin{align*}
\left\Vert u\right\Vert_\X = \left\Vert u\right\Vert_{\ell^1_\nu(\Z,\ell^1_\eta(\N,\R))}.
\end{align*}
The zero-finding map $\F$ is defined as
\begin{align*}
\F(u) = -\left(I-\left(\frac{\pi}{L}\K\right)^2\right)^2 u +\rho\circledast u - \beta u\circledast u \circledast u,
\end{align*}
where the operator $\K$ is defined in equation~\eqref{eq:K}. Given an approximate zero $\bu\in\Pi_{K,N}\X$, we construct an approximate inverse $A$ of $\D\F(\bu)$ as in Section~\ref{sec:A}, the main difference being that the $\frac{-1}{ik}\Upsilon$ factor in the tail part of $A$ is now taken as $\frac{-1}{\lambda_k}$, where
\begin{align*}
\lambda_k := \left(1-\frac{\pi k}{L}\right)^2,
\end{align*}
and we make sure to take the truncation level $K$ large enough to ensure that $\lambda_k$ cannot vanish for $k\geq K$.

Regarding the bounds $Y$, $Z_1$ and $Z_2$ needed to apply Theorem~\ref{th:NK}, $Y$ can again be computed by simply evaluating $A\F(\bu)$ (with interval arithmetic). For $Z_1$, we again introduce $B=I-AD\F(\bu)$ and separate its norm using Lemma~\ref{lem:norm_op_split}
\begin{align*}
\left\Vert B \right\Vert_\X = \max\left(\sup_{\substack{X \in \Pi_{3K-2} \X \\ X\neq 0}} \frac{\left\Vert BX \right\Vert_\X}{\left\Vert X \right\Vert_\X},\ \sup_{\substack{X \in \left(I-\Pi_{3K-2}\right) \X \\ X\neq 0}} \frac{\left\Vert BX \right\Vert_\X}{\left\Vert X \right\Vert_\X}\right).
\end{align*}
As in Section~\ref{sec:Z1_Lorenz}, we can again compute explicitly an upper-bound $Z_1^{finite}$ for the first supremum, and control the second one by
\begin{align*}
Z_1^{tail} = \frac{\left\Vert \rho -3\beta\, \bu\circledast\bu \right\Vert_\X}{\min\limits_{k\geq K} \lambda_k}.
\end{align*}
Finally, we can take $Z_2 = 6\vert\beta\vert \left\Vert A\right\Vert_\X \left(\left\Vert \bu\right\Vert_\X + r^*\right)$ for the last bound.

\subsection{Results in the 1-parameter case}

Here is an example of the type of results that can be obtained with this approach.
\begin{theorem}
\label{th:SH1}
Consider the 1D Swift--Hohenberg equation~\eqref{eq:SH} with $\beta=1$, $L=2\pi$, $\brho = 2.9$ and $\delta = 1.6$ in~\eqref{eq:norm_rho}, and the branch of approximate steady states $\bu$ represented in blue in Figure~\ref{fig:branches}, whose precise description in terms of Fourier$\times$Chebyshev coefficients can be downloaded at~\cite{BreGit}. Take also $\nu=\eta=1$.

With the notations introduced in Section~\ref{sec:SH_setup}, there exists a zero $u^*$ of $\F$ in $\X$ such that $\left\Vert \bu - u^*\right\Vert_\X \leq r_{min} = 3.8\times 10^{-4}$, and which is unique among all $u$ in $\X$ such that $\left\Vert \bu - u^*\right\Vert_\X \leq r_{max} = 6.5\times 10^{-3}$. This $u^*$ corresponds to an isolated branch of steady states of~\eqref{eq:SH} with $\beta=1$ and $L=2\pi$, for $\rho$ in $[1.3,4.5]$.
\end{theorem}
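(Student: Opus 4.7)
The plan is a direct application of Theorem~\ref{th:NK} with the setup of Section~\ref{sec:SH_setup}. Concretely, I would load the precomputed Fourier$\times$Chebyshev coefficients of $\bu$ from~\cite{BreGit}, pick truncation levels $K$ (large enough so that $\lambda_k\neq 0$ for all $k\geq K$) and $N$ consistent with the support of $\bu$, and compute numerically an approximate inverse $A_K$ of $\Pi_K D\F(\bu)\Pi_K$ viewed as a $(2K-1)\times(2K-1)$ matrix of multiplication operators on $\ell^1_\eta(\N,\R)$ represented by elements of $\Pi_N\ell^1_\eta$. The operator $A$ is then assembled by combining $A_K$ on the finite part with the diagonal operator $-1/\lambda_k$ on the tail.

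Next, using interval arithmetic (Intlab), I would evaluate the three bounds $Y$, $Z_1$, and $Z_2$ derived in Section~\ref{sec:SH_setup}. The bound $Y=\|A\F(\bu)\|_\X$ is exactly computable since $\F$ is cubic and $\bu$ has finite support, so that $A\F(\bu)$ lies in a known finite-dimensional subspace. For $Z_1$, I would apply the splitting of Lemma~\ref{lem:norm_op_split} to $B=I-AD\F(\bu)$: the finite block $Z_1^{finite}$ is obtained by applying the matrix-of-multiplication-operators version of Lemma~\ref{lem:norm_op} column-by-column (each column involving only finitely many non-zero multiplication operators on $\ell^1_\eta$), while $Z_1^{tail}$ is given in closed form by $\|\rho-3\beta\,\bu\circledast\bu\|_\X/\min_{k\geq K}\lambda_k$. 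For $Z_2$, I would fix some $r^*>0$ and use $Z_2=6|\beta|\|A\|_\X(\|\bu\|_\X+r^*)$, once again controlling $\|A\|_\X$ via Lemma~\ref{lem:norm_op} on the finite block and by $1/\min_{k\geq K}\lambda_k$ on the tail.

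Once the rigorously enclosed values of the three bounds are in hand, I would check the Newton-Kantorovich conditions~\eqref{eq:cond1}--\eqref{eq:cond2}; if they hold, Theorem~\ref{th:NK} immediately produces a unique zero $u^*$ of $\F$ in $\B_\X(\bu,r)$ for every $r\in[r_{min},r_{max})$. Injectivity of $A$ is argued exactly as in the proof of Theorem~\ref{th:Lorenz}: the tail of $A$ is diagonal with non-zero entries, and the finite block $A_K$, being a matrix over the commutative ring $\ell^1_\eta$, is injective as soon as it is surjective (and surjectivity is guaranteed by $Z_1<1$). The branch interpretation is then a consequence of the normalization $\rho=2.9+1.6p$, which sweeps $[1.3,4.5]$ as $p$ varies in $[-1,1]$: for each such $p$, the function $u^*(\cdot,p)$ is a steady state of~\eqref{eq:SH}, lying within uniform distance $r_{min}$ of $\bu(\cdot,p)$ thanks to Lemma~\ref{lem:gPC_convo}, and the isolation of the branch is encoded by the local uniqueness within radius $r_{max}$.

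The main anticipated obstacle is keeping $Z_1^{finite}$ below $1$: because the branch spans a wide interval of $\rho$ across which several other bifurcating branches are visible in Figure~\ref{fig:bif_SH}, the operator $D\F(\bu)$ will be rather ill-conditioned and the Chebyshev truncation level $N$ must be taken reasonably large so that the numerical inverse $A_K$ approximates $(\Pi_K D\F(\bu)\Pi_K)^{-1}$ sharply enough. This drives up the dominant memory cost of the proof, namely the storage of the $(2K-1)^2 N$ real numbers encoding $A_K$ (cf. Remark~\ref{rem:Lorenz_memory}), and choosing $K$ and $N$ large enough for validation to succeed while keeping the computation tractable is where the practical difficulty really lies.
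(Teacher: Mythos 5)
Your first two paragraphs match the paper's proof of the existence and uniqueness statement: the bounds $Y$, $Z_1$, $Z_2$ from Section~\ref{sec:SH_setup} are evaluated with interval arithmetic, the hypotheses~\eqref{eq:cond} are checked, and Theorem~\ref{th:NK} is applied, with injectivity of $A$ handled exactly as you describe. However, there is a genuine gap in how you justify the final claim, namely that $u^*$ corresponds to an \emph{isolated} branch. You write that ``the isolation of the branch is encoded by the local uniqueness within radius $r_{max}$,'' but this conflates two different statements. The uniqueness provided by Theorem~\ref{th:NK} holds in the ball $\B_\X(\bu,r_{max})$ of the Fourier$\times$gPC space $\X = \ell^1_\nu(\Z,\ell^1_\eta(\N,\R))$: it only excludes other \emph{families} of solutions, parameterized over the whole interval $p\in[-1,1]$ with $\ell^1_\eta$ gPC coefficients, that are close to $u^*$ in the $\X$-norm. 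It does not exclude that for some particular value $p_0$ there is a second steady state of~\eqref{eq:SH} near $u^*(\cdot,p_0)$ — for instance a transversally bifurcating branch crossing the validated one at an isolated parameter value. Such a crossing branch does not produce any element of $\X$ close to $u^*$, so uniqueness in $\X$ says nothing about it; this is precisely the phenomenon one must rule out to call the branch isolated.

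The paper closes this gap with an additional argument that you are missing: it applies Theorem~\ref{th:NK} a second time, \emph{pointwise in $p$}, to the map $\F_p$ acting on $\ell^1_\nu(\Z,\R)$ with approximate solution $\bu(p)$ and approximate inverse $A(p)$ obtained by evaluating all the gPC coefficients at $p$. The key observations are that, by Lemma~\ref{lem:gPC_convo}, $\left\Vert u(p)\right\Vert_{\ell^1_\nu(\Z,\R)} \leq \left\Vert u\right\Vert_{\X}$ for every $p\in[-1,1]$, and that since $A$ and $B = I - AD\F(\bu)$ are matrices of \emph{multiplication} operators on $\ell^1_\eta(\N,\R)$, evaluation at $p$ commutes with the generalized convolution products and the pointwise operator norms are dominated by the $\X$ operator norms (via Lemma~\ref{lem:norm_op}). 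Consequently the \emph{same} numbers $Y$, $Z_1$, $Z_2$ serve as valid bounds for every fixed $p$, and Theorem~\ref{th:NK} yields local uniqueness of the steady state $u^*(p)$ in $\ell^1_\nu(\Z,\R)$ uniformly in $p$, which is what genuinely forbids any other branch from bifurcating off the validated one. Note that this pointwise argument is possible precisely because the finite block of $A$ is built out of multiplication operators — a structural feature your proposal exploits only for storage, but which is equally essential for the isolation claim.
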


\begin{figure}[h!]
\includegraphics[width=0.495\linewidth]{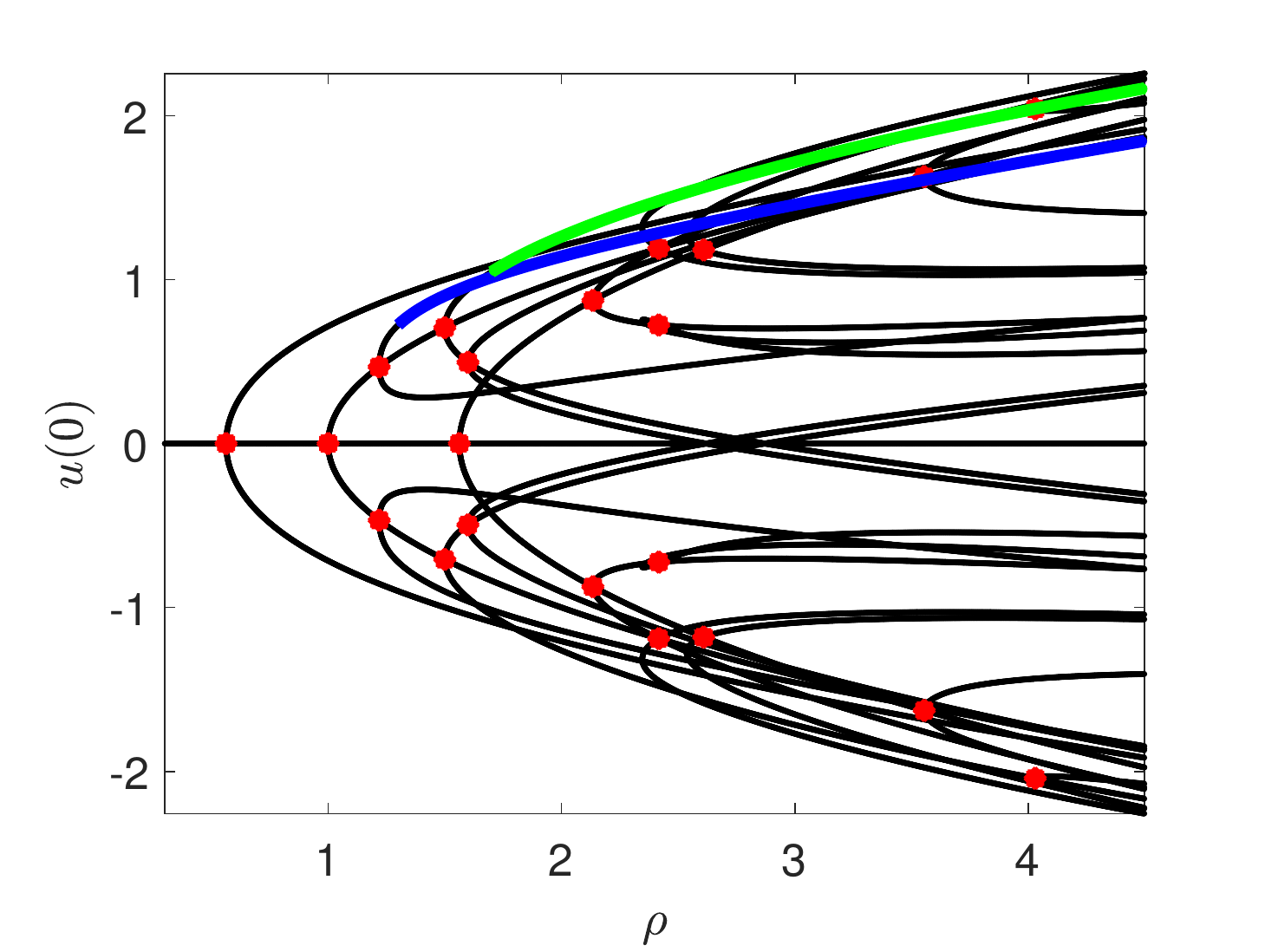} \hfill
\includegraphics[width=0.495\linewidth]{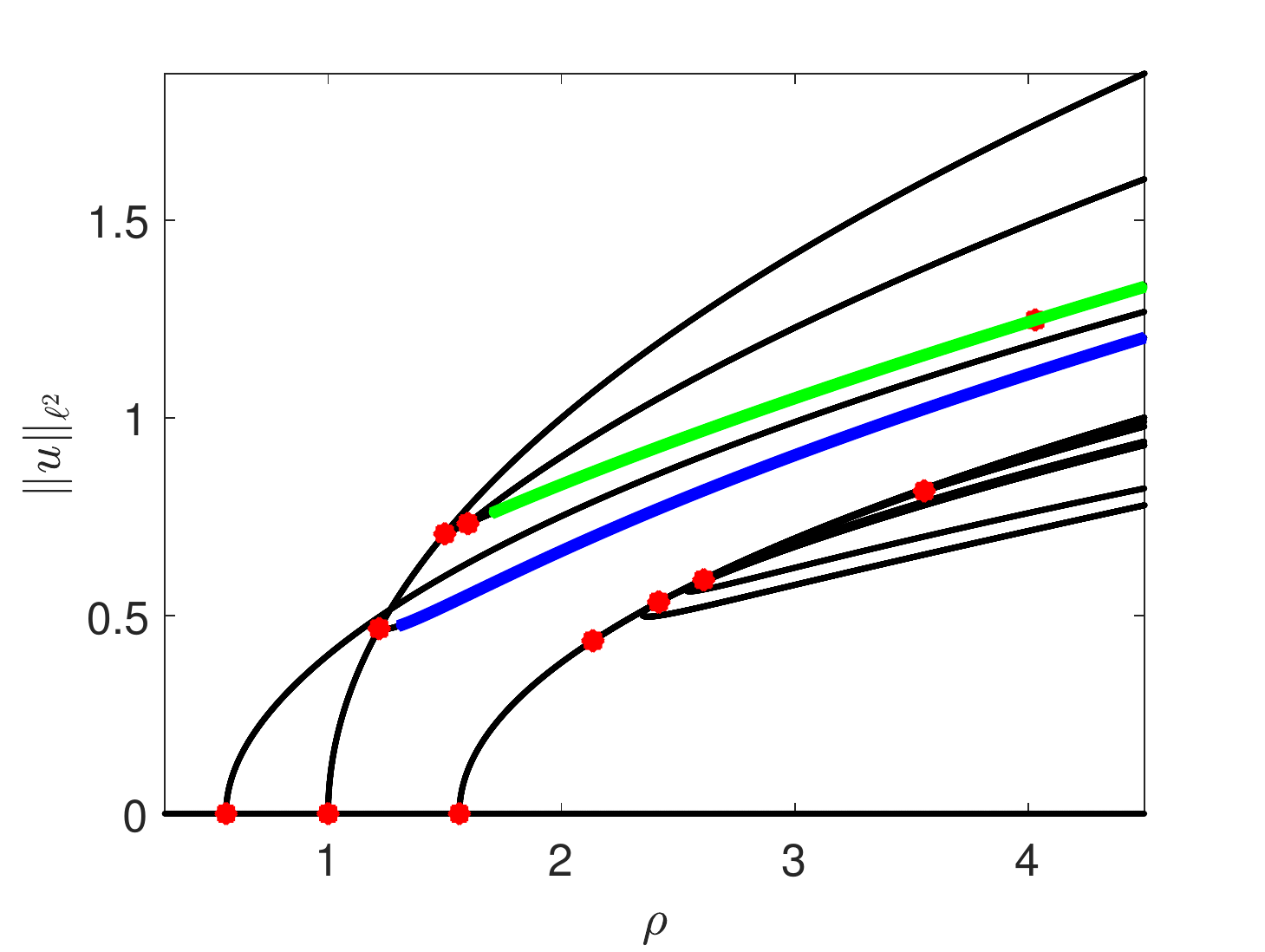}
\includegraphics[width=0.495\linewidth]{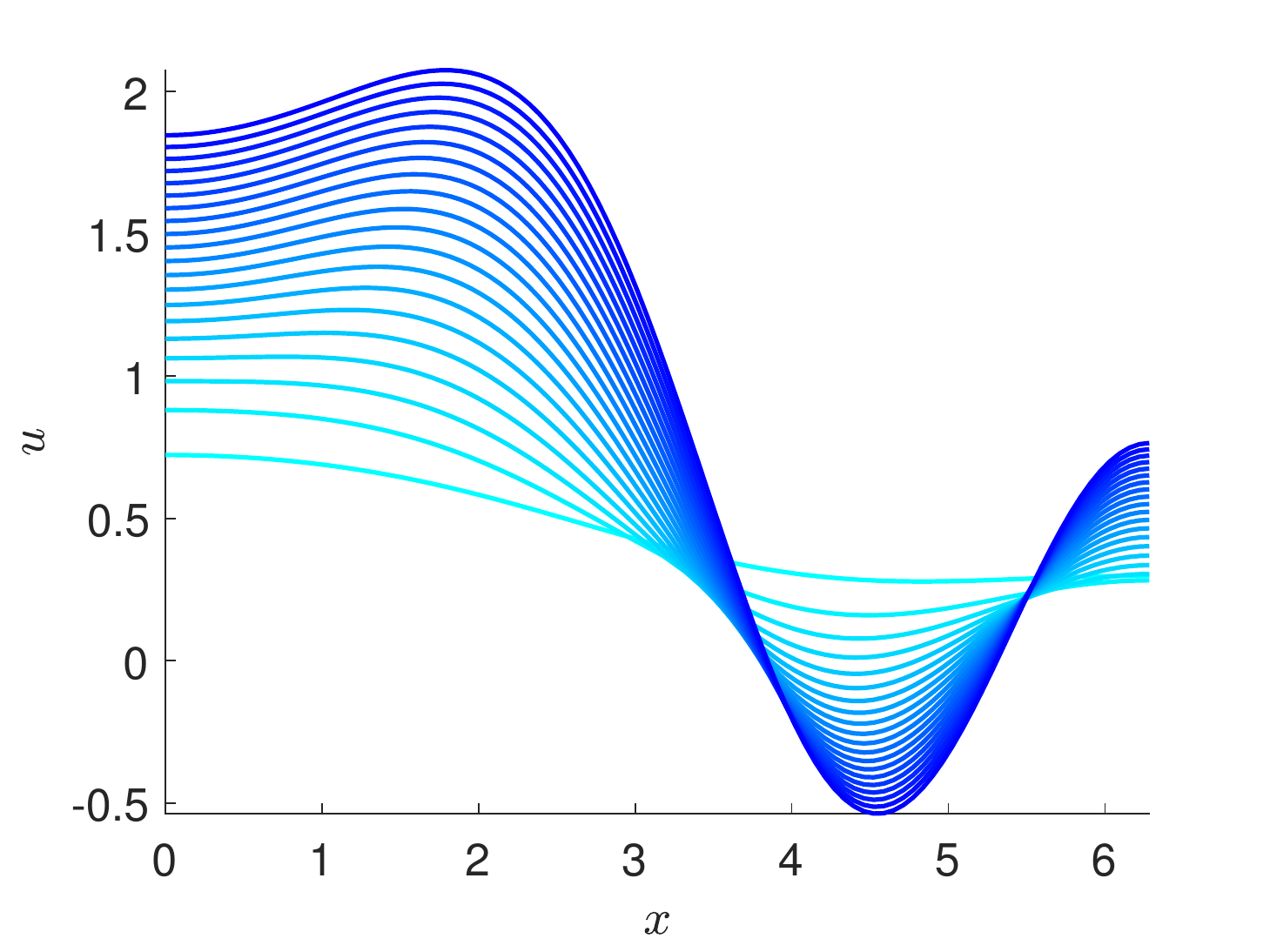} \hfill
\includegraphics[width=0.495\linewidth]{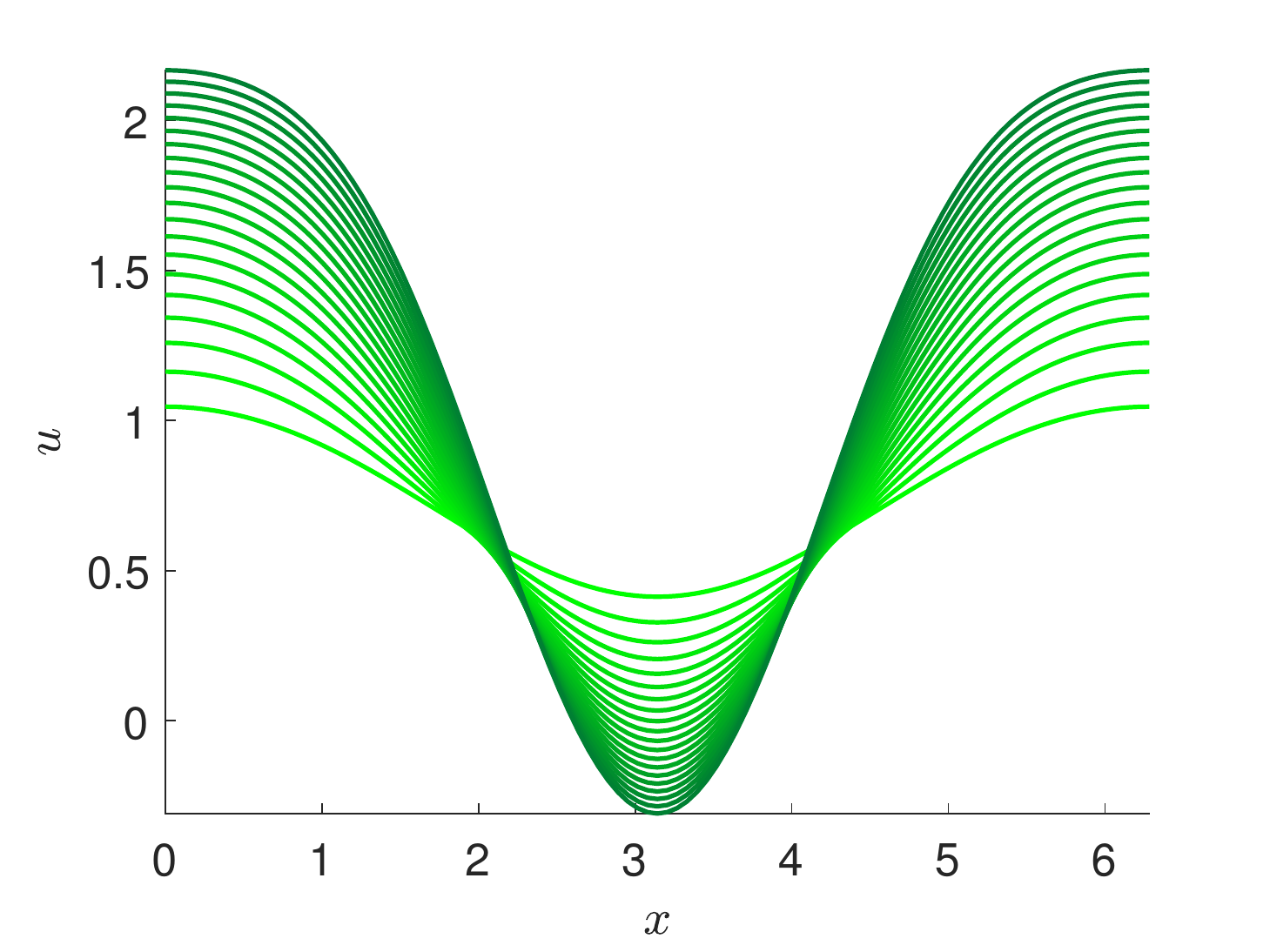}
\caption{Two specific portions of branches, in blue and green on the bifurcation diagrams at the top, for which several solutions along the branch are represented at the bottom (left for the blue branch and right for the green one). Regarding the solutions at the bottom, the lighter the color the smaller the corresponding value of $\rho$. The whole blue branch has been validated in Theorem~\ref{th:SH1}. The validation of green branch failed, which is coherent with the fact that numerics suggest the presence of a bifurcation crossing that branch.}
\label{fig:branches}
\end{figure}

\begin{proof}
We again evaluate the bounds $Y$, $Z_1$ and $Z_2$, obtained in Section~\ref{sec:SH_setup}, check that assumptions~\eqref{eq:cond} are satisfied, and apply Theorem~\ref{th:NK}, which yields the existence and uniqueness statement for a zero $u^*$ of $\F$ near $\bu$. The computational parts of the proof, namely the computation of the finite part $A_K$ of $A$ and the evaluation of the bounds, can be reproduced using \texttt{script\_SwiftHohenberg.m} available at~\cite{BreGit} (with Intlab~\cite{Rum99} for the required interval arithmetic computations). 

It remains to be proven that the branch of steady states corresponding to $u^*$ is isolated. This is essentially due to the fact that the $\ell^1_\eta$ norm controls the $\C^0$ norm (Lemma~\ref{lem:gPC_convo}), hence we can apply Theorem~\ref{th:NK} uniformly in $p$. To be more precise, for any $p$ in $[-1,1]$ and $u=\left(u_{k,n}\right)$ in $\X = \ell^1_\nu\left(\Z,\ell^1_\eta(\N,\R)\right)$, we can consider $u(p)=\left(u_{k}(p)\right)$ which now belongs to $\ell^1_\nu\left(\Z,\R\right)$, with $u_k(p)$ as in~\eqref{eq:up}. Thanks to Lemma~\ref{lem:gPC_convo}, we have that, for any $p$ in $[-1,1]$:
\begin{align}
\label{eq:C0_ell1}
\left\Vert u(p) \right\Vert_{\ell^1_\nu(\Z,\R)} \leq \left\Vert u \right\Vert_{\ell^1_\nu(\Z,\ell^1_\eta(\N,\R))}.
\end{align}
We also consider the map $\F_p$, which is defined as $\F$ but with $p$ fixed, and only acts on elements of $\ell^1_\nu\left(\Z,\R\right)$. Similarly, we recall that $A$ can be represented as an \emph{infinite matrix} $\left(A_{k,l}\right)_{k,l\in\Z}$ of operators on $\ell^1_\eta(\N,\C)$, and that each $A_{k,l}$ is in fact a multiplication operator on $\ell^1_\eta(\N,\R)$, represented by an element $a_{k,l}$ in $\ell^1_\eta(\N,\R)$. Hence we can consider $A(p) = \left(a_{k,l}(p)\right)_{k,l\in\Z}$, which now acts on $\ell^1_\nu(\Z,\R)$, where 
\begin{align*}
a_{k,l}(p) = \sum_{n\in\N} \left(a_{k,l}\right)_n \phi_n(p).
\end{align*}
Since the generalized convolution product of coefficients corresponds to the pointwise product of functions, we have that $A(p) \F_p(\bar u(p)) = \left(A\F(\bu)\right)(p)$, and by~\eqref{eq:C0_ell1}
\begin{align*}
\left\Vert A(p)\F_p(\bu(p)) \right\Vert_{\ell^1_\nu(\Z,\R)} \leq \left\Vert A\F(\bu) \right\Vert_{\X} \leq Y,
\end{align*}
for all $p$ in $[-1,1]$. Similarly,
\begin{align*}
\left\Vert I_{\ell^1_\nu(\Z,\R)} - A(p)D\F_p(\bu(p)) \right\Vert_{\ell^1_\nu(\Z,\R)} \leq \left\Vert I_\X - AD\F(\bu) \right\Vert_{\X} \leq Z_1,
\end{align*}
for all $p$ in $[-1,1]$. Indeed, for any linear operator $B=\left(B_{k,l}\right)_{k,l\in\Z}$ acting on $\X$, where each $B_{k,l}$ is a multiplication operator on $\ell^1_\eta(\N,\R)$ represented by an element $b_{k,l}$ in $\ell^1_\eta(\N,\R)$, we have (see Lemma~\ref{lem:norm_op})
\begin{align*}
\left\Vert B(p) \right\Vert_{\ell^1_\nu(\Z,\R)}=  \left\Vert \left(b_{k,l}(p)\right)_{k,l\in\Z} \right\Vert_{\ell^1_\nu}  \leq  \left\Vert \left(\left\Vert b_{k,l}\right\Vert_{\ell^1_\eta(\N,\R)}\right)_{k,l\in\Z} \right\Vert_{\ell^1_\nu} = \left\Vert B \right\Vert_{\X}.
\end{align*}
Finally, for any $p$ in $[-1,1]$ and any $v$ in $\B_{\ell^1_\nu(\Z,\R)}(\bu(p),r^*)$, we get
\begin{align*}
\left\Vert A(p)\left(D\F_p(v) - D\F(\bu(p))\right) \right\Vert_{\ell^1_\nu(\Z,\R)} &\leq  \left\Vert A(p) \right\Vert_{\ell^1_\nu(\Z,\R)} \left\Vert D\F_p(v) - D\F(\bu(p))\right\Vert_{\ell^1_\nu(\Z,\R)} \\
&\leq \left\Vert A(p) \right\Vert_{\ell^1_\nu(\Z,\R)} 6 \vert\beta\vert \left(\left\Vert \bu(p)\right\Vert_{\ell^1_\nu(\Z,\R)} + r^*\right) \\
&\leq Z_2.
\end{align*}
Hence, for each $p$ in $[-1,1]$ we can apply Theorem~\ref{th:NK} to the map $\F_p$ and the approximate solution $\bu(p)$, which proves that the steady state $u^*(p)$ of~\eqref{eq:SH} is locally unique, and in particular there cannot be a another branch of steady states of~\eqref{eq:SH} bifurcating from $u^*$.
\end{proof}

\begin{remark}
We used a Chebyshev expansion in $p$ in Theorem~\ref{th:SH1} because we expect it to be the most efficient choice to represent the branch of solutions. Indeed, while we could for instance have gotten a similar result with a Taylor expansion, we would have needed to take at least $N=47$ for assumption~\eqref{eq:cond2} to be satisfied, and $N=72$ if we wanted to get an error estimate $r_{min}$ which is as small as in Theorem~\ref{th:SH1}, whereas the current proof with a Chebyshev expansion uses only $N=15$.
\end{remark}

A remarkable part of Theorem~\ref{th:SH1} is that it guarantees that the portion of the branch that is validated is isolated, i.e. we have a proof that there is no other branch of steady states connected to this part. On the other hand, this means that we cannot hope to validate a part of a branch that goes through a bifurcation. Indeed, if we try to validate the branch of steady states represented in green in Figure~\ref{fig:branches}, the proof fails because $Z_1$ (in fact $Z_1^{finite}$) remains larger than $1$, no matter how large we take $K$ and $N$. This does not prove, but strongly suggests, that there is indeed a bifurcation on this part of the branch. Computer-assisted proofs of the existence of bifurcations are possibles, but require more work, see for instance~\cite{AriGazKoc21,AriKoc10,LesSanWan17,BerLesQue21} and the references therein. If the parameter $\rho$ is modeled by a random variable, one may want to try and quantify how these possible bifurcations impact the behavior of the system, which is for instance discussed in the recent work~\cite{KueLux21}.

\subsection{Extension to multi-parameter validated continuation}

Let us now consider both $\rho$ and $\beta$ as varying parameters in~\eqref{eq:SH}, normalized as
\begin{align}
\label{eq:norm_rho_beta}
\rho = \brho + \delta_\rho p_1,\qquad \beta = \bbeta + \delta_\beta p_2,
\end{align}
where $\brho,\bbeta$ and $\delta_\rho,\delta_\beta\geq 0$ are given constants, and $p_1,p_2$ vary in $[-1,1]$. Away from bifurcation points, we expect to get a 2-dimensional manifold of steady states parametrized by $p=(p_1,p_2)$. Using a bi-variate gPC expansion, we can approximate and then rigorously validate such manifold of steady states. It is remarkable that this generalization from the 1-parameter case requires only very minor modifications, both in terms of the estimates and in terms of the code. The only other work we are aware of in which validated multi-parameter continuation is studied is~\cite{GamLesPug16}, in which the transition from the 1-parameter case requires a significant effort.

Starting back from~\eqref{eq:SH_Four}, we now consider a bi-variate expansion for each Fourier coefficient
\begin{align*}
u_k(p) = u_k(p_1,p_2) = \sum_{n\in\N^2} u_{k,n} \phi_n(p),
\end{align*}
where the new basis is simply obtained by taking the tensor product of two univariate bases:
\begin{align*}
\phi_n(p):= \phi^{(1)}_{n_1}(p_1)\, \phi^{(2)}_{n_2}(p_2) \qquad \forall~n=(n_1,n_2)\in\N^2.
\end{align*}
In the sequel we take the Chebyshev polynomials of the first kind for both $\phi^{(1)}_{n_1}$ and $\phi^{(2)}_{n_2}$, but all the bases mentioned up to now could be combined here. 
A generalized convolution product associated to such bi-variate expansion can be defined in a straightforward way from the generalized convolution products associated to each univariate basis, see e.g.~\cite[Appendix]{BreKue20}.

Up to changing the space to $\X = \ell^1_\nu\left(\Z,\ell^1_{\eta_1}\left(\N,\ell^1_{\eta_2}(\N,\R)\right)\right) \simeq \ell^1_\nu\left(\Z,\ell^1_\eta(\N^2,\R)\right)$, to taking $\beta = \bbeta + \delta_\beta p_2$ instead of $\beta$ constant in $\F$, and to replacing $\vert \beta\vert$ by $\left\Vert \beta\right\Vert_{\ell^1_{\eta_2}(\N,\R)}$ in the $Z_2$ estimate, we can use exactly the same setup as in Section~\ref{sec:SH_setup} to validate an approximate 2-dimensional manifold of steady states.

\begin{theorem}
\label{th:SH2}
Consider the 1D Swift--Hohenberg equation~\eqref{eq:SH} with $L=2\pi$ and the manifold of approximate steady states $\bu$ represented in Figure~\ref{fig:manifold_u0}, whose precise description in terms of Fourier$\times$gPC coefficients can be downloaded at~\cite{BreGit}.

There exists a zero $u^*$ of $\F$ in $\X$ such that $\left\Vert \bu - u^*\right\Vert_\X \leq r_{min} = 4.4\times 10^{-3}$, and which is unique among all $u$ in $\X$ such that $\left\Vert \bu - u^*\right\Vert_\X \leq r_{max} = 8\times 10^{-3}$. This $u^*$ corresponds to an isolated manifold of steady states of~\eqref{eq:SH} with $L=2\pi$, for $(\rho,\beta)$ in $[2,4]\times[0.25,1.75]$.
\end{theorem}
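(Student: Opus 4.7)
The plan is to mirror the proof of Theorem~\ref{th:SH1} step by step, since the author has emphasized that only minor modifications are required. First, I would set up the zero-finding map exactly as in Section~\ref{sec:SH_setup}, but with the space $\X = \ell^1_\nu\left(\Z,\ell^1_\eta(\N^2,\R)\right)$ and with $\beta$ now treated as an element of $\ell^1_{\eta_2}(\N,\R)$ via $\beta = \bbeta + \delta_\beta p_2$ rather than a scalar. The generalized convolution product on $\ell^1_\eta(\N^2,\R)$ is the tensor product of the two univariate Chebyshev convolutions, which preserves the Banach algebra property (the linearization coefficients still satisfy the analogue of~\eqref{eq:sumto1}), so Lemma~\ref{lem:BanachBanach} continues to equip $\X$ with a Banach algebra structure under $\circledast$. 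The approximate inverse $A$ is built in the same finite/tail split as before: a finite part $A_K$ given by a $(2K-1)\times(2K-1)$ matrix whose entries are multiplication operators by elements of $\Pi_{N_1,N_2}\ell^1_\eta(\N^2,\R)$, and a diagonal tail with factors $-1/\lambda_k$.

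Next, I would compute the bounds $Y$, $Z_1$, $Z_2$. The only change relative to Section~\ref{sec:SH_setup} is to replace $\vert\beta\vert$ by $\left\Vert \beta\right\Vert_{\ell^1_{\eta_2}}$ in the $Z_2$ expression and to reuse Lemma~\ref{lem:norm_op} componentwise for the $Z_1^{finite}$ part, noting that a multiplication operator on $\ell^1_\eta(\N^2,\R)$ still has norm equal to the $\ell^1_\eta$ norm of the multiplier. As in Theorem~\ref{th:SH1}, $Y$, $Z_1^{finite}$ are evaluated with interval arithmetic on the (now larger but still finite) bivariate polynomial $A\F(\bu)$ and on the bivariate coefficient matrix representing $I-AD\F(\bu)$ truncated to Fourier modes $|k|<3K-2$, while $Z_1^{tail}$ uses the explicit formula in terms of $\left\Vert \rho - 3\beta\circledast\bu\circledast\bu\right\Vert_\X/\min_{k\geq K}\lambda_k$. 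Once the conditions~\eqref{eq:cond} are checked numerically, Theorem~\ref{th:NK} yields the existence of $u^*\in\B_\X(\bu,r_{min})$ and uniqueness in $\B_\X(\bu,r_{max})$, with $r_{min}$ and $r_{max}$ the standard expressions of~\eqref{eq:r}.

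For the isolation statement, I would repeat almost verbatim the argument used at the end of the proof of Theorem~\ref{th:SH1}, now evaluating at an arbitrary $p=(p_1,p_2)\in[-1,1]^2$ instead of $p\in[-1,1]$. The key inequality
\begin{align*}
\left\Vert u(p)\right\Vert_{\ell^1_\nu(\Z,\R)} \leq \left\Vert u\right\Vert_{\ell^1_\nu(\Z,\ell^1_\eta(\N^2,\R))}
\end{align*}
is again a direct consequence of Lemma~\ref{lem:gPC_convo} applied with the bivariate basis (since $\left\vert \phi_n(p)\right\vert = \left\vert \phi^{(1)}_{n_1}(p_1)\phi^{(2)}_{n_2}(p_2)\right\vert \leq 1$ on $[-1,1]^2$). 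Consequently $Y$, $Z_1$, $Z_2$ dominate uniformly their pointwise analogues $Y(p),Z_1(p),Z_2(p)$, so Theorem~\ref{th:NK} applied to $\F_p$ and $\bu(p)$ yields local uniqueness of $u^*(p)$ for each $p$, ruling out any bifurcating branch intersecting the validated manifold.

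I expect the only real obstacle to be computational rather than conceptual: the finite part of $A$ now requires storing $(2K-1)^2 N_1 N_2$ complex coefficients (cf.\ Remark~\ref{rem:Lorenz_memory}), and interval-arithmetic multiplication of two bivariate Chebyshev series is significantly more expensive than in the univariate case. Choosing $K$, $N_1$, $N_2$ so that the proof closes while the matrices still fit in memory, and ensuring that $Z_1^{finite}<1$ over the entire parameter square $[-1,1]^2$ (which is harder near the boundaries of the domain, where bifurcations may lurk), is where the tuning effort goes; the remainder of the argument is a direct transcription of the one-parameter proof.
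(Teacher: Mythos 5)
Your proposal follows essentially the same route as the paper: the actual proof of Theorem~\ref{th:SH2} consists precisely in re-running the setup of Section~\ref{sec:SH_setup} on the tensorized space $\X = \ell^1_\nu(\Z,\ell^1_\eta(\N^2,\R))$ with $\beta$ promoted to a gPC element and $\vert\beta\vert$ replaced by $\Vert\beta\Vert_{\ell^1_{\eta_2}(\N,\R)}$ in the $Z_2$ estimate, then verifying the hypotheses of Theorem~\ref{th:NK} via interval arithmetic. Your pointwise-in-$p=(p_1,p_2)$ isolation argument is the correct transcription of the corresponding step in the proof of Theorem~\ref{th:SH1}, which the paper's terse proof of Theorem~\ref{th:SH2} leaves implicit.
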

\begin{proof}
The proof again amounts to checking the assumptions of Theorem~\ref{th:NK}. The computational parts of the proof, namely the computation of the finite part $A_K$ of $A$ and the evaluation of the bounds, can be reproduced using \texttt{script\_SwiftHohenberg\_2para.m} available at~\cite{BreGit} (with Intlab~\cite{Rum99} for the required interval arithmetic computations). 
\end{proof}

\begin{figure}[h!]
\includegraphics[width=0.6\linewidth]{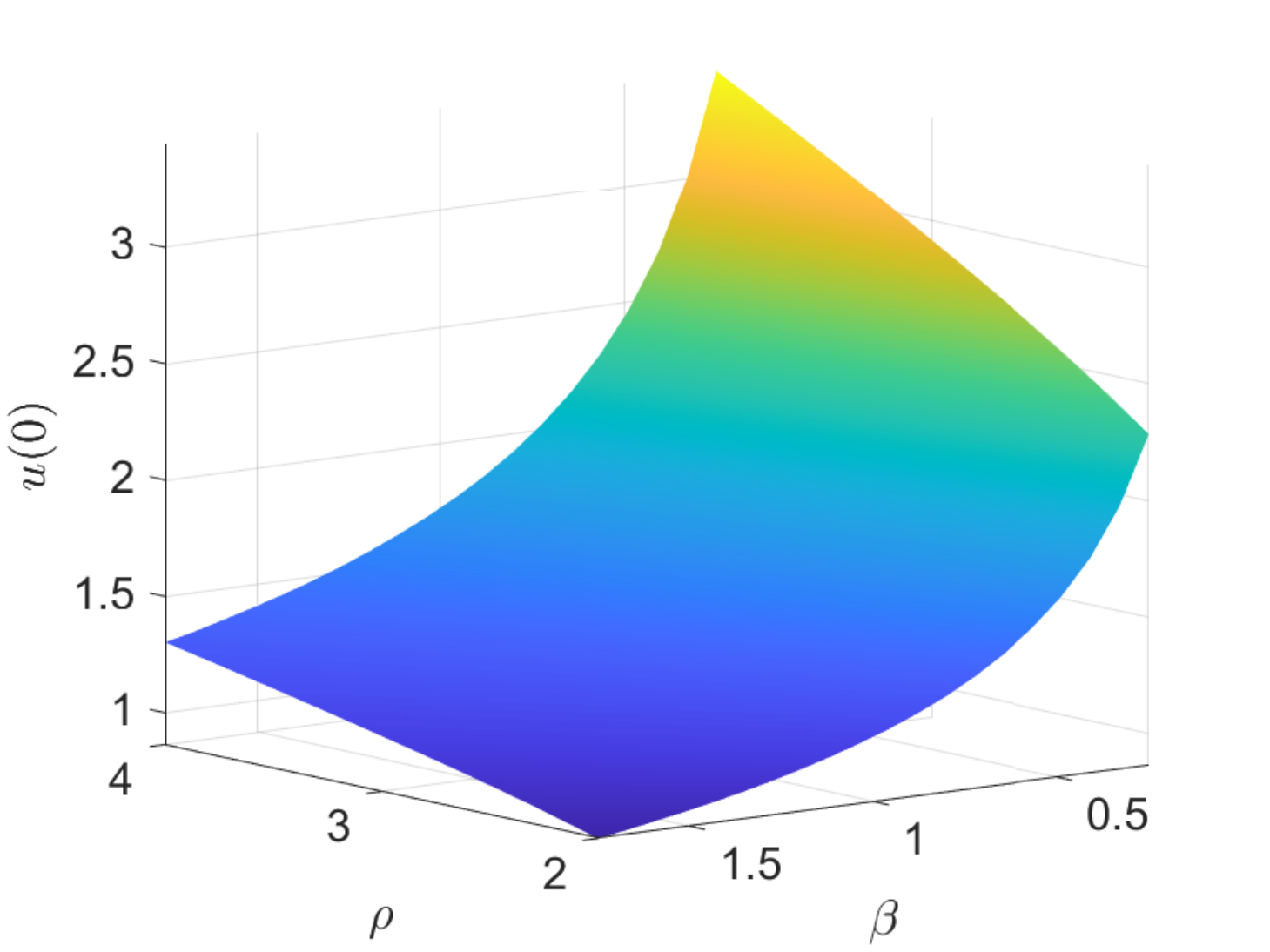}
\caption{A validated manifold of steady states of~\eqref{eq:SH}, for $L=2\pi$, and $(\rho,\beta)$ in $[2,4]\times[0.25,1.75]$, represented using $u(0)$ as a projection. The approximate manifold of steady states $\bu$ is composed of $K=20$ Fourier modes, $N_1 = 5$ Chebyshev modes in $\rho$ and $N_2 = 10$ Chebyshev modes in $\beta$.}
\label{fig:manifold_u0}
\end{figure}

\section{Conclusion}
\label{sec:conclusion}

In this work, we introduced a new methodology to obtain fully rigorous a posteriori error bounds for several types of gPC expansions (Legendre, Chebyshev of the first and the second kind, and Gegenbauer expansions). We showcased via several examples that this strategy can be used in the context of random invariant sets generated by random ODEs or PDEs, allowing to get a very precise and certified description of random periodic orbits of ODEs and of random steady states of parabolic PDEs.

These techniques can also be seen through the lens of rigorous/validated numerics, and in this context they provide a new way of rigorously computing curves or higher-dimensional manifolds of solutions in parameter-dependent systems, generalizing an approach introduced recently in~\cite{AriGazKoc21}. It is remarkable that the memory requirements associated to this approach can be made to scale linearly with the dimension of the gPC projection (see Remark~\ref{rem:Lorenz_memory}).

We finish by mentioning possible generalizations but also current limitations and open questions related to this work that we believe to be of interest.

\begin{itemize}
\item We only considered ODEs or PDEs with polynomial nonlinearities, which is a particularly convenient framework to work in with spectral techniques. Yet, some non-polynomial nonlinearities can be handled in a similar way, making use of ideas from automatic differentiation, see e.g.~\cite{LesMirRan16}.
\item While we only studied random steady states and random periodic orbits in this work, the proposed approach generalizes in a straightforward way to rigorously compute other types of random invariant sets, as soon as we already have the tools to rigorously compute them in the deterministic case, which is for instance the case for invariant manifolds or connecting orbits.
\item We restricted our attention to random parameters having somewhat classical distributions (namely uniform distributions or at least symmetric beta distributions). For more exotic distributions, in particular distributions that are obtained from data and have no analytic expression, one of the main difficulty with our approach is that we require an explicit knowledge of the linearization coefficients. We believe that generalizing the techniques of this paper to a wider class of random parameters (maybe making use of a probability transform to recover a uniform distribution) would be of interest.
\item Even if we stick with classical distributions, for which the linearization coefficients are known analytically, our approach can currently only handle bounded random parameters, and in particular excludes Gaussian or exponential distributions. The main reason is that the corresponding orthogonal polynomials, namely Hermite and Laguerre polynomials, do not readily give rise to a discrete convolution structure like the one we could make use of in this work (Lemma~\ref{lem:gPC_convo}). Finding a way to rigorously compute gPC expansions with those bases, which occur very naturally in many problems, would also be of great interest. 
\item We conclude with a comment about the implementation. Because we wanted to handle several different expansions in a uniform way, we did not take advantage of the fact that for some expansions (namely Chebyshev and Taylor expansions), the corresponding convolutions can be very efficiently computed using FFT (or DCT) algorithms. If one wanted to focus solely on Chebyshev expansions, which we would for instance recommend if one is only interested in the deterministic parameter-continuation viewpoint, making use of the FFT could improve the performances of the code significantly, especially for higher dimensional problems.
\end{itemize}

%%%%%%%%%%%%%%%%%%%%%%%%%%%%%%%%%%%%%%%%%%%%%%%%%%%%%%%%%%%%%%%%%%%%%%%%%%%%%%%%%%%%%%%%%%%%%%%%%%%%%%%%%

\bibliographystyle{abbrv}
\bibliography{../../../../Bibfile/bibfile}

\end{document}